\numberwithin{equation}{section}
\newtheorem{theorem}{Theorem}[section]
\newtheorem{corollary}{Corollary}[theorem]
\newtheorem{lemma}[theorem]{Lemma}
\newtheorem{prop}[theorem]{Proposition}
\newtheorem*{remark}{Remark}
\newtheorem{examples}[theorem]{Examples}
\theoremstyle{definition}
\newtheorem{definition}{Definition}[section]
\begin{document}
	\title[Motives of parabolic Higgs bundles and parabolic connections]
	{On motives of parabolic Higgs bundles and parabolic connections}
	
	\author{Sumit Roy}
		\address{Center for Geometry and Physics, Institute for Basic Science (IBS), Pohang 37673, Korea}
\email{sumit.roy061@gmail.com}
\thanks{E-mail: sumit.roy061@gmail.com}
\thanks{Address: Center for Geometry and Physics, Institute for Basic Science (IBS), Pohang 37673, Korea}
\subjclass[2020]{14C15, 14C30, 14D20, 14D23, 70G45}
\keywords{Motives, Grothendieck motives, Voevodsky motives, Chow motives, Higgs bundles, Parabolic connections, Hodge moduli, $E$-polynomial}
	
	\begin{abstract}
	Let $X$ be a compact Riemann surface of genus $g \geq 2$ and let $D\subset X$ be a fixed finite subset. We considered the moduli spaces of parabolic Higgs bundles and of parabolic connections over $X$ with the parabolic structure over $D$. For generic weights, we showed that these two moduli spaces have equal Grothendieck motivic classes and their $E$-polynomials are the same. We also show that the Voevodsky and Chow motives of these two moduli spaces are also equal.  We showed that the Grothendieck motivic classes and the $E$-polynomials of parabolic Higgs moduli and of parabolic Hodge moduli are closely related. Finally, we considered the moduli spaces with fixed determinants and showed that the above results also hold for the fixed determinant case.
	\end{abstract}
	\maketitle
	
	\section{Introduction}
	In this paper we consider the moduli spaces of parabolic Higgs bundles (\cite{MS80}, \cite{H87a}, \cite{H87},\cite{S92},\cite{S94a}, \cite{BY96}) and parabolic connections (\cite{S94a}, \cite{N93}) over a compact Riemann surface $X$ of genus $g \geq 2$. These objects have several geometric structures that are useful in different topics, like algebraic geometry, differential geometry, mirror symmetry, mathematical physics, Langlands duality and so on. We prove equalities of some motivic classes (namely Grothendieck motives, Voevodsky motives and Chow motives) for these two moduli spaces. We also prove that their $E$-polynomials are equal.
	
	A \textit{parabolic bundle} $E_*$ over $X$ is a holomorphic vector bundle $E$ over $X$ together with a weighted flag over a fixed finite set $D\subset X$, called the \textit{parabolic structure}. These weights are real numbers between $0$ and $1$. A \textit{parabolic Higgs bundle} is a pair $(E_*,\Phi)$, where $E_*$ is a parabolic bundle and $\Phi : E_* \to E_* \otimes K(D)$ is a parabolic Higgs field, where $K$ is the canonical bundle over $X$. On the other hand, a \textit{parabolic connection} is a pair $(E_*,\mathcal{D})$ where $\mathcal{D} : E \to E \otimes K(D)$ is a logarithmic connection on the underlying vector bundle satisfying some properties. 
	
	In \cite{S94a}, Simpson considered an algebraic family over $\mathbb{C}$, which he called the Hodge moduli space, such that the fibers over $0$ and $1$ are exactly the moduli spaces of Higgs bundles and connections respectively. Also, that gives a homeomorphism between these two moduli spaces, which is famously known as the non-abelian Hodge correspondence (see \cite{S92}, \cite{S94a}, \cite{S95}). These two moduli spaces have singularities in general. But if we consider that the rank and degree are coprime, then these moduli spaces are smooth. For coprime rank and degree, Hausel and Thaddeus in \cite[Theorem 6.2]{HT03} proved that the $E$-polynomials are equal for these two moduli spaces and they have pure Hodge structure. There is a natural $\mathbb{C}^*$-action on the Hodge moduli space and with that, the moduli space becomes a semiprojective variety \cite{HV15}. Using the smoothness and semiprojectivity of the Hodge moduli space, Hoskins and Lehalleur in \cite{HL21} established a motivic version of the non-abelian Hodge correspondence. In that paper, they proved that the moduli of Higgs bundles and connections have equal motivic classes in various setups, see also \cite{GHS14} for motives of moduli spaces of Higgs bundles. Later, Federov, A. Soibelman, and Y. Soibelman in \cite{FSS20} proved a motivic Donaldson-Thomas invariants of these two moduli spaces in a parabolic setting.
	
	In this paper, we consider three types of motives, namely Grothendieck motives, Voevodsky motives and Chow motives. Let $\mathcal{V}_\mathbb{C}$ denote the category of complex quasi-projective varieties. Let $K(\mathcal{V}_\mathbb{C})$ denote the \textit{Grothendieck ring of varieties} and let $\hat{K}(\mathcal{V}_\mathbb{C})$ be the dimensional completion. Let $Z$ be a quasi-projective variety. Then $[Z] \in \hat{K}(\mathcal{V}_\mathbb{C})$ is called the \textit{motive} of $Z$. If $Z$ is $n$-dimensional with pure Hodge structure, then the corresponding $E$-polynomial is defined by 
	\[
	E(Z) = E(Z)(a,b) = \sum_{p,q=0}^{n} (-1)^{p+q}h_c^{p,q}(Z)u^pv^q.
	\]
Let $\mathcal{M}_{\mathrm{Higgs}}(r,d,\alpha)$, $\mathcal{M}_{\mathrm{pc}}(r,d,\alpha)$ and $\mathcal{M}_\mathrm{Hod}(r,d,\alpha)$ be the moduli space of parabolic Higgs bundles, moduli of parabolic connections and parabolic Hodge moduli space of rank $r$, degree $d$ and generic weights $\alpha$ over $X$ respectively. We proved the following Theorem
\begin{theorem}
In $\hat{K}(\mathcal{V}_\mathbb{C})$ we have the following motivic equalities
\[
[\mathcal{M}_\mathrm{Higgs}(r,d,\alpha)]=[\mathcal{M}_\mathrm{pc}(r,d,\alpha)] \hspace{0.2cm} \mathrm{and} \hspace{0.2cm} [\mathcal{M}_\mathrm{Hod}(r,d,\alpha)] = \mathbb{L}[\mathcal{M}_\mathrm{Higgs}(r,d,\alpha)].
\]
Therefore, we have the following equalities of the $E$-polynomials
\[
E(\mathcal{M}_\mathrm{Higgs}(r,d,\alpha))= E(\mathcal{M}_\mathrm{pc}(r,d,\alpha)) \hspace{0.2cm} \mathrm{and} \hspace{0.2cm} E(\mathcal{M}_\mathrm{Hod}(r,d,\alpha)) = uvE(\mathcal{M}_\mathrm{Higgs}(r,d,\alpha)).
\]
\end{theorem}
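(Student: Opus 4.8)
The plan is to adapt the motivic non-abelian Hodge correspondence of Hoskins--Lehalleur \cite{HL21} to the parabolic setting. The key object is the parabolic Hodge moduli space, which carries a morphism $\pi \colon \mathcal{M}_\mathrm{Hod}(r,d,\alpha)\to\mathbb{A}^1$ recording the Hodge parameter $\lambda$: its fibre over $0$ is $\mathcal{M}_\mathrm{Higgs}(r,d,\alpha)$, and for $\lambda\neq 0$ rescaling by $1/\lambda$ identifies $\pi^{-1}(\lambda)$ with $\mathcal{M}_\mathrm{pc}(r,d,\alpha)$. Moreover $\pi$ is equivariant for a $\mathbb{G}_m$-action on $\mathcal{M}_\mathrm{Hod}(r,d,\alpha)$ covering the weight-one scaling of $\mathbb{A}^1$, and for generic $\alpha$ the total space is smooth and semiprojective, so that its fixed locus is proper and $\lim_{t\to 0}t\cdot x$ exists for all $x$. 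I would derive the two motivic identities by computing $[\mathcal{M}_\mathrm{Hod}(r,d,\alpha)]$ in two different ways.

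First I would stratify the base as $\mathbb{A}^1=\{0\}\sqcup\mathbb{G}_m$. The cut-and-paste (scissor) relations in $\hat{K}(\mathcal{V}_\mathbb{C})$ give $[\mathcal{M}_\mathrm{Hod}(r,d,\alpha)]=[\pi^{-1}(0)]+[\pi^{-1}(\mathbb{G}_m)]$, and the $\mathbb{G}_m$-action trivialises the family over $\mathbb{G}_m$: since $\pi(t\cdot x)=t\,\pi(x)$, the map $(t,x)\mapsto t\cdot x$ is an isomorphism $\mathbb{G}_m\times\pi^{-1}(1)\xrightarrow{\sim}\pi^{-1}(\mathbb{G}_m)$. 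With $\pi^{-1}(1)\cong\mathcal{M}_\mathrm{pc}(r,d,\alpha)$ and $[\mathbb{G}_m]=\mathbb{L}-1$ this yields
\begin{equation*}
[\mathcal{M}_\mathrm{Hod}(r,d,\alpha)]=[\mathcal{M}_\mathrm{Higgs}(r,d,\alpha)]+(\mathbb{L}-1)\,[\mathcal{M}_\mathrm{pc}(r,d,\alpha)]. \tag{$\ast$}
\end{equation*}

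Second I would use the Bia\l{}ynicki-Birula decomposition of the $\mathbb{G}_m$-action. Because $\mathbb{G}_m$ scales $\mathbb{A}^1$ with positive weight, every fixed point lies over $0$, so the fixed locus $F=\bigsqcup_\beta F_\beta$ sits inside $\mathcal{M}_\mathrm{Higgs}(r,d,\alpha)$ and coincides with the fixed locus of the induced action there. Semiprojectivity makes each attracting set a Zariski-locally trivial affine fibration over its fixed component, so $[\mathcal{M}_\mathrm{Hod}(r,d,\alpha)]=\sum_\beta\mathbb{L}^{d_\beta^+}[F_\beta]$ and $[\mathcal{M}_\mathrm{Higgs}(r,d,\alpha)]=\sum_\beta\mathbb{L}^{e_\beta^+}[F_\beta]$, where $d_\beta^+$ and $e_\beta^+$ count the attracting directions in the total space and in the central fibre. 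As $\pi$ cuts out $\mathcal{M}_\mathrm{Higgs}(r,d,\alpha)=\pi^{-1}(0)$ transversally and the $\lambda$-direction is a single weight-one attracting direction, one has $d_\beta^+=e_\beta^++1$ for all $\beta$, hence $[\mathcal{M}_\mathrm{Hod}(r,d,\alpha)]=\mathbb{L}\,[\mathcal{M}_\mathrm{Higgs}(r,d,\alpha)]$. Comparing this with $(\ast)$ gives $(\mathbb{L}-1)\big([\mathcal{M}_\mathrm{Higgs}(r,d,\alpha)]-[\mathcal{M}_\mathrm{pc}(r,d,\alpha)]\big)=0$; cancelling the factor $\mathbb{L}-1$, which is legitimate in the dimensional completion as in \cite{HL21}, yields $[\mathcal{M}_\mathrm{Higgs}(r,d,\alpha)]=[\mathcal{M}_\mathrm{pc}(r,d,\alpha)]$ and completes both motivic equalities.

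Finally I would pass to $E$-polynomials through the ring homomorphism $E\colon\hat{K}(\mathcal{V}_\mathbb{C})\to\mathbb{Z}[u,v]$ with $E(\mathbb{L})=uv$. For generic $\alpha$ these moduli spaces carry pure Hodge structures---the parabolic analogue of \cite[Theorem 6.2]{HT03}---so their $E$-polynomials are the images of the motivic classes; applying $E$ to the two equalities gives $E(\mathcal{M}_\mathrm{Higgs}(r,d,\alpha))=E(\mathcal{M}_\mathrm{pc}(r,d,\alpha))$ and $E(\mathcal{M}_\mathrm{Hod}(r,d,\alpha))=uv\,E(\mathcal{M}_\mathrm{Higgs}(r,d,\alpha))$ (here cancellation of $\mathbb{L}-1$ is automatic, since $\mathbb{Z}[u,v]$ is a domain). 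I expect the main difficulty to be geometric rather than formal: establishing in the parabolic case that $\mathcal{M}_\mathrm{Hod}(r,d,\alpha)$ is smooth and semiprojective with proper fixed locus and $\mathbb{G}_m$-equivariant $\pi$ realising the stated fibre identifications, and in particular verifying the transversality and weight count $d_\beta^+=e_\beta^++1$ that underlies $[\mathcal{M}_\mathrm{Hod}(r,d,\alpha)]=\mathbb{L}\,[\mathcal{M}_\mathrm{Higgs}(r,d,\alpha)]$; once these structural inputs are secured, the motivic bookkeeping above is formal.
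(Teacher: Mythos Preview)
Your proposal is correct and essentially coincides with the paper's approach: both reduce the theorem to the smoothness and semiprojectivity of $\mathcal{M}_{\mathrm{Hod}}(r,d,\alpha)$ (established in the paper's Section~3) together with the $\mathbb{C}^*$-equivariant submersion $\mathrm{pr}\colon\mathcal{M}_{\mathrm{Hod}}(r,d,\alpha)\to\mathbb{C}$, after which the motivic identities are formal. The only difference is packaging---the paper invokes the formal step as a black box (Alfaya--Oliveira, Theorem~5.6, recorded here as Proposition~\ref{mostimportant}), whereas you unpack it via the scissor relation over $\mathbb{A}^1=\{0\}\sqcup\mathbb{G}_m$ and a Bia\l{}ynicki--Birula weight count; as a minor aside, purity is not needed to deduce the $E$-polynomial equalities, since $E$ is already a ring homomorphism on $\hat{K}(\mathcal{V}_{\mathbb{C}})$.
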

Here $\mathbb{L}$ is the Lefschetz motive. To prove this theorem, we first prove that the moduli spaces $\mathcal{M}_\mathrm{Higgs}(r,d,\alpha)$ and $\mathcal{M}_\mathrm{Hod}(r,d,\alpha)$ are semiprojective. For details of the proof see Theorem \ref{Grothendieck}. 

We then consider Voevodsky's category of geometric motives $DM_{\mathrm{gm}}(\mathbb{C}; R)$ over $\mathbb{C}$ with coefficients in a ring $R$. We denote by $M_{\mathrm{gm}}(X)_R$ the geometric motive of the smooth variety $X$ with coefficients in $R$. Also, we consider the category of Chow motives $\textbf{Chow}^{\mathrm{eff}}(\mathbb{C}; R)$ which has an embedding into the Voevodsky's category of effective geometric motives $DM^{\mathrm{eff}}_{\mathrm{gm}}(\mathbb{C}; R) \subset DM_{\mathrm{gm}}(\mathbb{C}; R)$. We denote by $C(X)_R$ the Chow motive of a smooth variety $X$ with coefficients in $R$. Then using the semi-projectivity, we prove the following motivic equalities
\begin{theorem}
For any ring $R$, we have the following isomorphisms of motives,
\[
M_{\mathrm{gm}}\big(\mathcal{M}_\mathrm{Higgs}(r,d,\alpha)\big)_R \cong M_{\mathrm{gm}}\big(\mathcal{M}_\mathrm{pc}(r,d,\alpha)\big)_R \in DM_{\mathrm{gm}}(\mathbb{C}; R).
\]
and 
\[
C\big(\mathcal{M}_\mathrm{Higgs}(r,d,\alpha)\big)_R \cong C\big(\mathcal{M}_\mathrm{pc}(r,d,\alpha)\big)_R \in \textbf{Chow}^{\mathrm{eff}}(\mathbb{C}; R).
\]
\end{theorem}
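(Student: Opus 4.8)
The plan is to realise both moduli spaces as the two special fibres of a single $\mathbb{G}_m$-equivariant family and then run the motivic non-abelian Hodge argument of Hoskins--Pepin Lehalleur \cite{HL21} in the parabolic setting. Simpson's parabolic Hodge moduli space carries a morphism $\pi : \mathcal{M}_\mathrm{Hod}(r,d,\alpha) \to \mathbb{A}^1$ together with a $\mathbb{G}_m$-action covering the weight-one scaling action on $\mathbb{A}^1$; the fibre $\pi^{-1}(0)$ is $\mathcal{M}_\mathrm{Higgs}(r,d,\alpha)$ and $\pi^{-1}(1)$ is $\mathcal{M}_\mathrm{pc}(r,d,\alpha)$, while the $\mathbb{G}_m$-action trivialises the family over $\mathbb{G}_m$, giving a $\mathbb{G}_m$-equivariant isomorphism $\pi^{-1}(\mathbb{G}_m) \cong \mathcal{M}_\mathrm{pc} \times \mathbb{G}_m$. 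By Theorem \ref{Grothendieck} the total space $\mathcal{M}_\mathrm{Hod}$ and the central fibre $\mathcal{M}_\mathrm{Higgs}$ are smooth and semiprojective, and all three spaces are smooth for generic $\alpha$; this is exactly the input needed to import the arguments of \cite{HL21}.

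First I would apply the motivic Białynicki--Birula decomposition to the two smooth semiprojective $\mathbb{G}_m$-varieties $\mathcal{M}_\mathrm{Hod}$ and $\mathcal{M}_\mathrm{Higgs}$. They share the same smooth proper fixed locus $F = \bigsqcup_i F_i$, since the $\mathbb{G}_m$-fixed points necessarily lie over $0 \in \mathbb{A}^1$. Writing $m_i$ for the rank of the repelling part of the normal bundle of $F_i$, one obtains
\[
M_{\mathrm{gm}}(\mathcal{M}_\mathrm{Hod})_R \cong \bigoplus_i M_{\mathrm{gm}}(F_i)_R(m_i)[2m_i] \cong M_{\mathrm{gm}}(\mathcal{M}_\mathrm{Higgs})_R,
\]
the two repelling ranks agreeing because the extra normal direction to $\mathcal{M}_\mathrm{Higgs} = \pi^{-1}(0)$ inside $\mathcal{M}_\mathrm{Hod}$ is the $\lambda$-direction, which has positive (attracting) weight and so does not contribute to $m_i$. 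In particular $M_{\mathrm{gm}}(\mathcal{M}_\mathrm{Higgs})_R$ is a pure, effective, Tate-twisted Chow motive, and the closed embedding of the central fibre induces an isomorphism of motives.

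Next I would feed the open complement into the Gysin localization triangle of the smooth divisor $\mathcal{M}_\mathrm{Higgs} = \pi^{-1}(0) \subset \mathcal{M}_\mathrm{Hod}$,
\[
M_{\mathrm{gm}}\big(\pi^{-1}(\mathbb{G}_m)\big)_R \to M_{\mathrm{gm}}(\mathcal{M}_\mathrm{Hod})_R \to M_{\mathrm{gm}}(\mathcal{M}_\mathrm{Higgs})_R(1)[2] \to,
\]
and substitute $M_{\mathrm{gm}}\big(\pi^{-1}(\mathbb{G}_m)\big)_R \cong M_{\mathrm{gm}}(\mathcal{M}_\mathrm{pc})_R \otimes M_{\mathrm{gm}}(\mathbb{G}_m)_R \cong M_{\mathrm{gm}}(\mathcal{M}_\mathrm{pc})_R \oplus M_{\mathrm{gm}}(\mathcal{M}_\mathrm{pc})_R(1)[1]$. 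Combined with the previous step this expresses $M_{\mathrm{gm}}(\mathcal{M}_\mathrm{pc})_R \otimes M_{\mathrm{gm}}(\mathbb{G}_m)_R$ in terms of $M_{\mathrm{gm}}(\mathcal{M}_\mathrm{Higgs})_R \otimes M_{\mathrm{gm}}(\mathbb{G}_m)_R$. The hard part will be cancelling this common $\mathbb{G}_m$-factor --- the motivic incarnation of dividing by $(\mathbb{L}-1)$, already delicate in the Grothendieck ring. I expect to resolve it as in \cite{HL21} using Bondarko's weight structure on $DM_{\mathrm{gm}}(\mathbb{C};R)$: the summand $M_{\mathrm{gm}}(-)(1)[1]$ sits in strictly higher weight than the weight-zero pure motive $M_{\mathrm{gm}}(\mathcal{M}_\mathrm{Higgs})_R$ produced above, so passing to the weight-zero graded piece isolates $M_{\mathrm{gm}}(\mathcal{M}_\mathrm{pc})_R$ and identifies it with $M_{\mathrm{gm}}(\mathcal{M}_\mathrm{Higgs})_R$; equivalently, one checks that both fibre inclusions $i_0,i_1$ induce isomorphisms onto $M_{\mathrm{gm}}(\mathcal{M}_\mathrm{Hod})_R$. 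This \emph{purity-and-cancellation step} --- the motivic shadow of the Hausel--Thaddeus purity of the Hodge structure --- is the genuine obstacle, and is where the parabolic-specific verification must be made, namely that the fixed components $F_i$ are smooth projective and that the decomposition is valid with arbitrary coefficients $R$.

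Finally, for the Chow statement I would note that every $F_i$ is smooth and proper, so each $M_{\mathrm{gm}}(F_i)_R(m_i)[2m_i]$, and hence $M_{\mathrm{gm}}(\mathcal{M}_\mathrm{Higgs})_R$ and $M_{\mathrm{gm}}(\mathcal{M}_\mathrm{pc})_R$, lie in the essential image of the fully faithful embedding $\textbf{Chow}^{\mathrm{eff}}(\mathbb{C};R) \hookrightarrow DM^{\mathrm{eff}}_{\mathrm{gm}}(\mathbb{C};R)$. Thus the objects $C(\mathcal{M}_\mathrm{Higgs})_R$ and $C(\mathcal{M}_\mathrm{pc})_R$ are defined, and full faithfulness of the embedding promotes the isomorphism of Voevodsky motives obtained above to an isomorphism of effective Chow motives, which completes the proof.
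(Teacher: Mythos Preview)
Your proposal is correct and follows the same route as the paper, but at a different level of abstraction. The paper's own proof is essentially two lines: having established in Section~3 that $\mathcal{M}_\mathrm{Hod}(r,d,\alpha)$ is smooth semiprojective with a $\mathbb{C}^*$-equivariant submersion to $\mathbb{A}^1$, it invokes \cite[Theorem~B.1]{HL21} directly to conclude $M_{\mathrm{gm}}(\mathrm{pr}^{-1}(0))_R \cong M_{\mathrm{gm}}(\mathrm{pr}^{-1}(1))_R$, and then \cite[Corollary~A.5]{HL21} for purity and hence the Chow statement. What you have written is, in effect, a sketch of the \emph{proof} of those two results from \cite{HL21}: the motivic Bia\l ynicki--Birula decomposition giving $M_{\mathrm{gm}}(\mathcal{M}_\mathrm{Hod})\cong M_{\mathrm{gm}}(\mathcal{M}_\mathrm{Higgs})$ via the shared fixed locus (your observation that the $\lambda$-direction is attracting and hence does not alter the repelling ranks $m_i$ is exactly the point), the Gysin triangle for the central fibre, and the weight-structure cancellation to extract $M_{\mathrm{gm}}(\mathcal{M}_\mathrm{pc})$ from $M_{\mathrm{gm}}(\mathcal{M}_\mathrm{pc})\otimes M_{\mathrm{gm}}(\mathbb{G}_m)$. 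Your identification of this last step as the genuine obstacle is accurate and matches the structure of the argument in \cite{HL21}.

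Two small remarks. First, the semiprojectivity you need is established in Propositions~\ref{important} (and the lemmas preceding it), not in Theorem~\ref{Grothendieck}; the latter already \emph{uses} semiprojectivity. Second, your phrasing ``both fibre inclusions $i_0,i_1$ induce isomorphisms onto $M_{\mathrm{gm}}(\mathcal{M}_\mathrm{Hod})_R$'' is the cleanest formulation of what the weight argument ultimately delivers, and is indeed how \cite{HL21} packages the conclusion; it is worth stating this as the goal up front rather than as an afterthought. With those adjustments your argument is a faithful unpacking of the black box the paper cites, and buys you nothing extra over the paper's proof except a clearer view of why the result holds.
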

For details of the proof, see Theorem \ref{Voevodsky} and \ref{Chow}.

Finally, in the last section, we consider the moduli spaces with fixed determinants and we prove the above two theorems in the fixed determinant setup. See Theorem \ref{fixed1}, \ref{fixed2} and \ref{fixed3}.

\subsection{Future works}
There are some obvious questions raised by the outcomes we get. First, this paper derives properties of the moduli spaces of parabolic Higgs bundles and of parabolic connections, such as semiprojectivity using the rich geometry of the Hitchin fibration and we showed that the motives of these two different moduli spaces are the same. However, we believe that a closer inspection of these structures will yield many more geometrical properties of these moduli spaces. Since a motive of a variety should capture many of its cohomological invariants, we believe that we can derive several motivic properties of the moduli space of parabolic connections by studying the motives of parabolic Higgs bundles. We have explicitly discussed the motives of the moduli of parabolic Higgs bundles for rank $2$, odd degree case, and we expect that following arguments as in \cite{FHL22}, one can give an explicit description of the motives for rank $3$ case. 
	
	\section{Preliminaries}
	\subsection{Parabolic bundles}
	Let $X$ be a compact Riemann surface of genus $g \geq 2$ and let $D =\{p_1,\dots, p_n\} \subset X$ be a fixed subset of $n\geq 1$ distinct marked points of $X$.
		\begin{definition}\label{parabolic}
		 A \textit{parabolic bundle} $E_*$ of rank $r$ (assuming $r\geq 2$) over $X$ is a holomorphic vector bundle $E$ of rank $r$ over $X$ endowed with a parabolic structure along the divisor $D$, i.e. for every point $p \in D$, we have
	\begin{enumerate}
		\item a filtration of subspaces 
		\[
		E_p \eqqcolon E_{p,1}\supsetneq E_{p,2} \supsetneq \dots \supsetneq E_{p,r_p} \supsetneq E_{p,r_p+1} =\{0\},
		\]
		\item a sequence of real number satisfying 
		\[
		0\leq \alpha_1(p) < \alpha_2(p) < \dots < \alpha_{r_p}(p) < 1,
		\]
	\end{enumerate}
where $r_p$ is a natural number between $1$ and $r$. For all $i=1,\dots , r_p$, the real number $\alpha_i(p)$ is called the \textit{parabolic weight} associated to the subspace $E_{p,i}$. 
\end{definition}
For a fixed parabolic structure we denote the collection of all parabolic weights by $\alpha =\{(\alpha_1(p),\alpha_2(p),\dots,\alpha_{r_p}(p))\}_{p\in D}$. The parabolic structure is said to have \textit{full flags} if 
\[\mathrm{dim}(E_{p,i}/E_{p,i+1}) = 1
\] 
for all $i=1,\dots , r_p$ and for all $p\in D$, or equivalently $r_p=r$ for all $p\in D$.

 The \textit{parabolic degree} of a parabolic bundle $E_*$ is defined as
\[
\operatorname{pardeg}(E_*) \coloneqq \deg(E)+ \sum\limits_{p\in D}\sum\limits_{i=1}^{r_p} \alpha_i(p) \cdot \dim(E_{p,i}/E_{p,i+1})
\]
and the \textit{parabolic slope} of $E_*$ is defined as
\[
\mu_{\mathrm{par}}(E_*) \coloneqq \frac{\text{pardeg}(E_*)}{r}.
\]

In \cite{MY92}, Maruyama and Yokogawa gave an alternative definition of parabolic bundles in terms of coherent sheaves, which is useful to define the notion of parabolic tensor products and parabolic dual.

\begin{definition}
A \textit{parabolic homomorphism} $\phi : E_* \to E^\prime_*$ between two parabolic bundles is a homomorphism of underlying vector bundles that satisfies
the following: at each $p \in D$ we have 
\[
\alpha_i(p) > \alpha_j^\prime(p) \implies \phi(E_{p,i}) \subseteq E_{p,j+1}^\prime.
\]
Furthermore, we call such a homomorphism \textit{strongly parabolic} if
\[ \alpha_i(p) \geq \alpha_j^\prime(p) \implies \phi(E_{p,i}) \subseteq E_{p,j+1}^\prime
\]
for every $p \in D$.
\end{definition}

A parabolic subbundle $F_*\subset E_*$ is a holomorphic subbundle $F\subset E$ of the underlying vector bundle together with the induced parabolic structure, i.e. by taking the appropriate intersections.

A parabolic bundle $E_*$ is called \textit{stable} (resp. \textit{semistable}) if every nonzero proper subbundle $F_* \subset E_*$ satisfies
			\[
	\mu_{\mathrm{par}}(F_*) < \mu_{\mathrm{par}}(E_*) \hspace{0.2cm} (\mathrm{resp. } \hspace{0.2cm} \leq).
	\]
	
	The moduli space $\mathcal{M}(r,d,\alpha)$ of semistable parabolic bundles over $X$ of fixed rank $r$, degree $d$ and parabolic structure $\alpha$ was constructed by Mehta and Seshadri in \cite{MS80}. It is a normal projective complex variety of dimension
\[
\dim\mathcal{M}(r,d,\alpha) = r^2(g-1) + 1 + \dfrac{n(r^2-r)}{2}, 
\]
where the last summand comes from the assumption that the parabolic structure has full flags at each point $p\in D$. The stable locus of $\mathcal{M}(r,d,\alpha)$ is exactly the smooth locus of the moduli space. If weights are generic then semistability of a parabolic bundle implies stability, therefore the moduli space $\mathcal{M}(r,d,\alpha)$ is a smooth variety. 

\subsection{Parabolic Higgs bundles}
	Let $K$ be the canonical bundle on $X$. We write $K(D) \coloneqq K \otimes \mathcal{O}(D)$.
	
	\begin{definition}
	 A \textit{(strongly) parabolic Higgs bundle} on $X$ is a parabolic bundle $E_*$ on $X$ together with a Higgs field $\Phi : E_* \to E_* \otimes K(D)$ such that $\Phi$ is strongly parabolic, i.e. for all $p \in D$ we have $\Phi(E_{p,i}) \subset E_{p,i+1} \otimes \left.K(D)\right|_p$.
	\end{definition}
	
	We also have a notion of (non-strongly) parabolic Higgs bundle where the Higgs field $\Phi$ is a parabolic morphism. But in this paper, the Higgs field is always assumed to be strongly parabolic.
	
	For a parabolic Higgs bundle $(E_*,\Phi)$, a subbundle $F_*\subset E_*$ is called $\Phi$\textit{-invariant} if $\Phi$ preserves $F_*$, i.e. $\Phi(F_*)\subseteq F_* \otimes K(D)$.
	
	\begin{definition}
		A parabolic Higgs bundle $(E_*,\Phi)$ is said to be \textit{stable} (resp. \textit{semistable}) if every nonzero proper $\Phi$-invariant subbundle $F_* \subset E_*$ satisfies
			\[
	\mu_{\mathrm{par}}(F_*) < \mu_{\mathrm{par}}(E_*) \hspace{0.2cm} (\mathrm{resp. } \hspace{0.2cm} \leq).
	\]
\end{definition}

	Let $\mathcal{M}_{\mathrm{Higgs}}(r,d,\alpha)$ denote the moduli space of semistable parabolic Higgs bundles over $X$ of rank $r$, degree $d$ and full flag parabolic structure $\alpha$. It is a normal quasi-projective complex variety  (see \cite{Y93}, \cite{BY96}, \cite{T02}). The stable locus is exactly the smooth locus of the moduli space $\mathcal{M}_{\mathrm{Higgs}}(r,d,\alpha)$. If the weights are generic, then the semistability coincides with the stability of the moduli space. Thus, $\mathcal{M}_{\mathrm{Higgs}}(r,d,\alpha)$ is a smooth variety when the weights are generic.  
	
	Notice that the moduli space $\mathcal{M}(r,d,\alpha)$ is embedded in the moduli space $\mathcal{M}_{\mathrm{Higgs}}(r,d,\alpha)$ by considering the zero Higgs fields. By the para,sion of Serre duality (see \cite{Y93},\cite{Y95}), the cotangent bundle of $\mathcal{M}(r,d,\alpha)$
	\[
	T^*\mathcal{M}(r,d,\alpha) \subset \mathcal{M}_{\mathrm{Higgs}}(r,d,\alpha)
	\]
	is an open dense subset of $\mathcal{M}_{\mathrm{Higgs}}(r,d,\alpha)$. Thus the moduli space $\mathcal{M}_{\mathrm{Higgs}}(r,d,\alpha)$ has dimension
	\[
	\dim \mathcal{M}_{\mathrm{Higgs}}(r,d,\alpha) = 2\dim \mathcal{M}(r,d,\alpha) = 2r^2(g-1) + 2 + n(r^2-r).
	\]
	
	Let $t \in \mathbb{C}^*$ be a nonzero complex number. It can be checked that if $(E_*,\Phi) \in \mathcal{M}_{\mathrm{Higgs}}(r,d,\alpha)$ then so is $(E_*, t\Phi)$, i.e. $(E_*,t\Phi) \in \mathcal{M}_{\mathrm{Higgs}}(r,d,\alpha)$ is also a semistable parabolic Higgs bundle. Therefore, we have a standard $\mathbb{C}^*$-action on the moduli space $\mathcal{M}_{\mathrm{Higgs}}(r,d,\alpha)$, which is given by
	\[
	t \cdot (E_*,\Phi) = (E_*,t\Phi).
	\]

\subsubsection{Hitchin fibration}
Let $(E_*,\Phi)$ be a parabolic Higgs bundle of rank $r$. Consider the characteristic polynomial of the Higgs field $\Phi$,
	\[
	\det(x\cdot I - \Phi) = x^r + s_1x^{r-1} +\cdots +s_r,
	\]
	where $s_i =\mathrm{tr}(\wedge^i\Phi) \in H^0(X,K(D)^i)$ and $K(D)^i$ denotes the tensor product of $K^{\otimes i}$ and $i$-th power of the line bundle corresponding to the divisor $D$. 
	
	Since $\Phi$ is strongly parabolic, the residue of the parabolic Higgs field $\Phi$ at each marked point $p\in D$ is nilpotent with respect to the filtration. So the eigenvalues of $\Phi$ vanishes along the divisor $D$, i.e. $s_i \in H^0(X,K^i(D^{i-1})) \subset H^0(X,K(D)^i)$. Hence, we have the \textit{parabolic Hitchin fibration}
	\[
	h : \mathcal{M}_{\mathrm{Higgs}}(r,d,\alpha) \longrightarrow \mathcal{H} \coloneqq \bigoplus_{i=1}^{r} H^0(X, K^i(D^{i-1})),
	\]
	sending $(E_*,\Phi)$ to the coefficients of its characteristic polynomial of the Higgs field $\Phi$. Here the vector space $\mathcal{H}$ is called the \textit{Hitchin base}.
	
	Notice that the Hitchin fibration $h$ doesn't depend on the parabolic structure as it only depends on the Higgs field $\Phi$ and the line bundle $K(D)$. Also, $h$ is a proper surjective morphism (see \cite{M94}).
	
	Suppose $t \in \mathbb{C}^*$. Then the Hitchin base $\mathcal{H}$ also admits a natural $\mathbb{C}^*$-action, which is given by
	\begin{align*}
	    \mathbb{C}^* \times \mathcal{H} &\longrightarrow \mathcal{H}\\
	    (t, (s_1,s_2,\dots , s_r)) &\mapsto (ts_1,t^2s_2,\dots , t^rs_r).
	\end{align*}
	
	\subsection{Parabolic connections}
	A \textit{logarithmic connection} on a vector bundle $E$ over $X$, singular over the divisor $D$ is a holomorhic differential operator
	\[
	\mathcal{D} : E \to E \otimes K(D) 
	\]
	satisfying the Leibniz identity
	\begin{equation}\label{leibniz}
	    \mathcal{D}(fs)= f\mathcal{D}(s) + df\otimes s,
	\end{equation}
	where $f$ is a locally defined holomorphic function on $X$ and $s$ is a locally defined holomorphic section of $E$. For more details about the logarithmic connection, see \cite{D70} and \cite{BGKHME87}.

The fiber of $K(D)$ over a point $p\in D$ is identified with $\mathbb{C}$ by the Poincaré adjunction formula \cite[p. $146$]{GH78}. Consider the following morphism for a detailed explanation
 \begin{align*}
 \mathbb{C} &\longrightarrow \left.K(D)\right|_p \\ 
 x &\mapsto x\cdot \frac{dz}{z}(p),
 \end{align*}
 where $z$ is the coordinate function on $X$ such that $z(p)=0$ and defined on an open neighborhood of $p\in D$. It provides us with the necessary isomorphism. Let $E$ be a vector bundle that is holomorphic and has a logarithmic connection $\mathcal{D}$. We obtain the following $\mathcal{O}_X$-linear composition of morphisms from \eqref{leibniz}
 \[
E \xlongrightarrow[]{\mathcal{D}} E \otimes K(D) \xlongrightarrow[]{} (E \otimes K(D))_p \xlongrightarrow[]{\sim} E_p.
\]
The fact that $K(D)_p \cong \mathbb{C}$ implies the isomorphism $(E \otimes K(D))_p \xlongrightarrow[]{\sim} E_p$. Thus, we obtain a $\mathbb{C}$-linear morphism
\[
\mathrm{Res}(\mathcal{D},p) : E_p \longrightarrow E_p.
\]
We refer this $\mathbb{C}$-linear endomorphism as the \textit{residue} of $\mathcal{D}$ at $p\in D$ (see \cite{D70}). 

\begin{definition}
	A \textit{parabolic connection} on a parabolic bundle $E_*$ over $X$ is a logarithmic connection $\mathcal{D}$ on the underlying vector bundle $E$ satisfying the following conditions:
	\begin{enumerate}
	    \item On every fibre $E_p$ over each marked point $p \in D$, the logarithmic connection $\mathcal{D}$ satisfies 
	    \[
	    \mathcal{D}(E_{p,i}) \subseteq E_{p,i} \otimes \left.K(D)\right|_p
	    \]
	    for all $i = 1,2,\dots ,r$.
	    \item For all $p \in D$ and for all $i \in \{1,\dots , r\}$ the action of the residue $\mathrm{Res}(\mathcal{D},p) \in \mathrm{End}(E_p)$ on the quotient $E_{p,i}/E_{p,i+1}$ is the multiplication by $\alpha_{p,i}$, where $\alpha_{p,i}$'s are the parabolic weights over the point $p$.
	\end{enumerate}
\end{definition}	

\begin{remark}
    Since the residue $\mathrm{Res}(\mathcal{D},p)$ preserves the filtration over $p$, it acts on every quotient.
\end{remark}
 
	A parabolic connection will be denoted by $(E_*,\mathcal{D})$ and a parabolic subbundle $F_* \subset E_*$ is called $\mathcal{D}$\textit{-invariant} if $\mathcal{D}(F) \subseteq F \otimes K(D)$.
	
	\begin{definition}
	A parabolic connection $(E_*,\mathcal{D})$ is called \textit{stable} (resp. \textit{semistable}) if every non-zero proper $\mathcal{D}$-invariant subbundle $F_* \subset E_*$ satisfies
	\[
	\mu_{\mathrm{par}}(F_*) < \mu_{\mathrm{par}}(E_*) \hspace{0.3cm} (\mathrm{resp.}\hspace{0.05cm} \leq).
	\]
	\end{definition}
	The moduli space $\mathcal{M}_{\mathrm{pc}}(r,d,\alpha)$ of semistable parabolic connections of fixed rank $r$, degree $d$ and generic weight type $\alpha$ (assuming full flag structure) is a smooth quasi-projective irreducible complex variety of dimension 
	\[
	\dim\mathcal{M}_{\mathrm{pc}}(r,d,\alpha)= 2r^2(g-1) + 2 + n(r^2-r)
	\]
	(see \cite[Theorem 2.1]{I13}).
	
	\subsection{Parabolic $\lambda$-connections}
	Let $\lambda \in \mathbb{C}$ be a complex number. 
	\begin{definition}
	A \textit{parabolic $\lambda$-connection} over $X$ is a triple $(E_*,\lambda,\nabla)$ where $E_*$ is a parabolic bundle over $X$ and $\nabla : E \longrightarrow E \otimes K(D)$ is a holomorphic differential operator satisfying
	\begin{enumerate}
	    \item $\nabla(fs)= f\nabla(s) + \lambda \cdot df\otimes s$,
	where $f$ is a locally defined holomorphic function on $X$ and $s$ is a locally defined holomorphic section of $E$. 
	 \item On every fibre $E_p$, the connection $\nabla$ satisfies 
	    \[
	    \nabla(E_{p,i}) \subseteq E_{p,i} \otimes \left.K(D)\right|_p
	    \]
	    for all $i = 1,2,\dots ,r$.
	    \item For all $p \in D$ and for all $i \in \{1,\dots , r\}$ the action of the residue $\mathrm{Res}(\nabla,p)$ on the quotient $E_{p,i}/E_{p,i+1}$ is the multiplication by $\lambda\alpha_{p,i}$.
	\end{enumerate}
	\end{definition}
	A parabolic subbundle $F_* \subset E_*$ is called $\nabla$\textit{-invariant} if $\nabla(F) \subseteq F \otimes K(D)$. 
	\begin{definition}
	A parabolic $\lambda$-connection $(E_*,\lambda,\nabla)$ is \textit{stable} (resp. \textit{semistable}) if every non-zero proper $\nabla$-invariant subbundle $F_* \subset E_*$ satisfies
	\[
	\mu(F_*) < \mu(E_*) \hspace{0.3cm} (\mathrm{resp.}\hspace{0.05cm} \leq).
	\]
	\end{definition}
	We denote by $\mathcal{M}_\mathrm{Hod}(r,d,\alpha)$ the moduli space of semistable parabolic $\lambda$-connections over $X$ of fixed rank $r$, degree $d$ and weight type $\alpha$. For generic weights, the moduli space $\mathcal{M}_\mathrm{Hod}(r,d,\alpha)$ is a smooth quasiprojective complex variety (see \cite{A17}). This moduli space is also called the parabolic Hodge moduli space. 
	
	There is a canonical surjective algebraic map 
	\begin{equation}\label{submersion}
	    \mathrm{pr}\coloneqq \mathrm{pr}_\lambda : \mathcal{M}_\mathrm{Hod}(r,d,\alpha) \longrightarrow \mathbb{C}
	\end{equation}
	defined by $\mathrm{pr}(E_*,\lambda,\nabla)= \lambda$. 
	
	Let us consider the case $\lambda = 0$, i.e. the moduli space of parabolic $0$-connections $(E_*,0,\nabla)$. In this case the residue $\mathrm{Res}(\nabla,p)$ of the morphism $\nabla: E \longrightarrow E \otimes K(D)$ at every $p \in D$ acts as the zero map on the quotient $E_{p,i}/E_{p,i+1}$. Therefore for every $p\in D$, we have $\nabla(E_{p,i}) \subseteq E_{p,i+1} \otimes \left.K(D)\right|_p$. Thus, a parabolic $0$-connection is equivalent to a strongly parabolic Higgs bundle. Hence,
	\[
	\mathrm{pr}^{-1}(0) = \mathcal{M}_{\mathrm{Higgs}}(r,d,\alpha)  \subset \mathcal{M}_\mathrm{Hod}(r,d,\alpha). 
	\]
The natural $\mathbb{C}^*$-action on the moduli space $\mathcal{M}_{\mathrm{Higgs}}(r,d,\alpha)$ extends to a $\mathbb{C}^*$-action on $\mathcal{M}_\mathrm{Hod}(r,d,\alpha)$ defined by 
	\begin{equation}\label{action2}
	    	t\cdot (E_*,\lambda,\nabla) = (E_*,t\lambda,t\nabla).
	\end{equation}
	Similarly, if we consider the case $\lambda = 1$, then we get
	\[
	\mathrm{pr}^{-1}(1) = \mathcal{M}_{\mathrm{pc}}(r,d,\alpha) \subset \mathcal{M}_\mathrm{Hod}(r,d,\alpha).
	\]

\section{Semiprojectivity of the moduli spaces}	
In this section, we will prove the semiprojectivity of the moduli spaces $\mathcal{M}_\mathrm{Higgs}(r,d,\alpha)$ and $\mathcal{M}_\mathrm{Hod}(r,d,\alpha)$. 
	\begin{definition}[Semiprojective varieties]\label{semiprojective}
	Let $V$ be a quasi-projective complex variety equipped with a $\mathbb{C}^*$-action $z \mapsto t\cdot z$, $z\in V, t \in \mathbb{C}^*$. We say that $V$ is \textit{semiprojective} if it satisfies the following conditions:
	\begin{enumerate}
	    \item for every $x \in V$, the limit $\lim_{t\to 0} (t\cdot x)_{t\in \mathbb{C}^*}$ exists in $V$,
	    \vspace{0.1cm}
	    \item the fixed point locus $V^{\mathbb{C}^*}$ under the $\mathbb{C}^*$-action is proper.
\end{enumerate}
\end{definition}

\subsection{Semiprojectivity of the moduli space of parabolic Higgs bundles}\label{HiggsSemiproj}
\begin{lemma}\label{equivariant}
The Hitchin map $h : \mathcal{M}_{\mathrm{Higgs}}(r,d,\alpha) \to \mathcal{H}$ is $\mathbb{C}^*$-equivariant.
\end{lemma}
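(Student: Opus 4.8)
The plan is to verify the equivariance directly by comparing the two $\mathbb{C}^*$-actions through the map $h$. Recall that the action on $\mathcal{M}_{\mathrm{Higgs}}(r,d,\alpha)$ scales the Higgs field, $t\cdot(E_*,\Phi)=(E_*,t\Phi)$, while the action on the Hitchin base $\mathcal{H}$ is the weighted action $t\cdot(s_1,\dots,s_r)=(ts_1,t^2s_2,\dots,t^rs_r)$. Since $h$ sends $(E_*,\Phi)$ to the tuple of coefficients $(s_1,\dots,s_r)$ of the characteristic polynomial of $\Phi$, with $s_i=\mathrm{tr}(\wedge^i\Phi)\in H^0(X,K(D)^i)$, it suffices to show that $s_i(t\Phi)=t^i\,s_i(\Phi)$ for every $t\in\mathbb{C}^*$ and every $i$.

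The key point is the degree-$i$ homogeneity of each coefficient $s_i$ as a function of $\Phi$. First I would observe that the induced map on the $i$-th exterior power, $\wedge^i\Phi:\wedge^iE\to\wedge^iE\otimes K(D)^i$ acting by $v_1\wedge\cdots\wedge v_i\mapsto \Phi v_1\wedge\cdots\wedge\Phi v_i$, picks up one factor of $t$ from each of its $i$ slots, so that $\wedge^i(t\Phi)=t^i\,\wedge^i\Phi$ as twisted endomorphisms valued in $K(D)^i$. Taking traces gives $s_i(t\Phi)=\mathrm{tr}(\wedge^i(t\Phi))=t^i\,\mathrm{tr}(\wedge^i\Phi)=t^i\,s_i(\Phi)$. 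Equivalently, one can argue at the level of the characteristic polynomial: passing $\Phi$ to $t\Phi$ replaces its eigenvalues $\lambda_1,\dots,\lambda_r$ by $t\lambda_1,\dots,t\lambda_r$, and $s_i$ is (up to sign) the $i$-th elementary symmetric polynomial in these eigenvalues, hence homogeneous of degree $i$. Either way the scaling $s_i\mapsto t^i s_i$ is immediate.

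Combining these, I would compute
\[
h\big(t\cdot(E_*,\Phi)\big)=h(E_*,t\Phi)=\big(s_1(t\Phi),\dots,s_r(t\Phi)\big)=\big(t\,s_1(\Phi),\dots,t^r\,s_r(\Phi)\big)=t\cdot h(E_*,\Phi),
\]
which is exactly the asserted equivariance.

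There is no serious obstacle here; the entire content is the matching of the two actions. The only point that warrants care is checking that the scaling exponent $i$ imposed on the $i$-th factor of $\mathcal{H}$ genuinely agrees with the degree-$i$ homogeneity of $s_i$, i.e. that the weighted $\mathbb{C}^*$-action on the Hitchin base has been normalized precisely so that $h$ intertwines the two actions. Since $s_i$ lies in $H^0(X,K(D)^i)$ and is homogeneous of degree $i$ in $\Phi$, the weights $(1,2,\dots,r)$ on $\mathcal{H}$ are forced, and the equivariance follows.
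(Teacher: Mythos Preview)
Your proof is correct and follows essentially the same approach as the paper: both verify equivariance by directly computing that the $i$-th coefficient of the characteristic polynomial of $t\Phi$ is $t^i s_i$, so that $h(E_*,t\Phi)=t\cdot h(E_*,\Phi)$. Your write-up supplies a bit more justification for the homogeneity $s_i(t\Phi)=t^i s_i(\Phi)$ (via $\wedge^i(t\Phi)=t^i\wedge^i\Phi$ or via elementary symmetric polynomials in the eigenvalues), whereas the paper simply expands $\det(x\cdot I - t\Phi)$; otherwise the arguments are identical.
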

\begin{proof}
Recall that the $\mathbb{C}^*$-action on the Hitchin base $\mathcal{H}= \bigoplus_{i=1}^{r}H^0(X, K^i(D^{i-1}))$ is given by
\[
t\cdot (s_1,s_2,\dots , s_r) = (ts_1,t^2s_2,\dots , t^rs_r).
\]
Let $h\big((E_*,\Phi)\big) = (s_1,s_2,\dots , s_r)$, i.e. $s_i = \mathrm{tr}(\wedge^i\Phi)$ are the coefficients of the characteristic polynomial of $\Phi$. Then the characteristic polynomial of $t\Phi$ is given by
\[
\det(x\cdot I - t\Phi) = x^r + ts_1x^{r-1} + t^2s_2x^{r-2} + \cdots + t^rs_r.
\]
Therefore, 
\[
h\big((t\cdot (E_*,\Phi))\big)=h\big((E_*,t\Phi)\big)=(ts_1,t^2s_2,\dots , t^rs_r) = t\cdot (s_1,s_2,\dots , s_r) = t\cdot h\big((E_*,\Phi)\big).
\]
Hence, $h$ is $\mathbb{C}^*$-equivariant.
\end{proof}

To show the semiprojectivity of the moduli space $\mathcal{M}_{\mathrm{Higgs}}(r,d,\alpha)$ we have to show that the natural $\mathbb{C}^*$-action on $\mathcal{M}_{\mathrm{Higgs}}(r,d,\alpha)$ satisfies the above two conditions given in the Definition \ref{semiprojective}.
\begin{lemma}\label{limit1}
Let $(E_*,\Phi)$ be a semistable parabolic Higgs bundle. Then the limit $\lim_{t\to 0} (E_*,t\Phi)$ exists in $\mathcal{M}_{\mathrm{Higgs}}(r,d,\alpha)$.
\end{lemma}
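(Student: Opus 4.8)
The plan is to derive the existence of the limit from three ingredients: the properness of the Hitchin map $h$, its $\mathbb{C}^*$-equivariance established in Lemma \ref{equivariant}, and the elementary observation that the induced $\mathbb{C}^*$-action on the Hitchin base $\mathcal{H}$ contracts every point to the origin. Indeed, since $t\cdot(s_1,\dots,s_r)=(ts_1,t^2s_2,\dots,t^rs_r)$ scales each coordinate by a strictly positive power of $t$, we have $\lim_{t\to 0}(t\cdot s)=0$ for every $s\in\mathcal{H}$. Consequently the image under $h$ of the orbit $t\mapsto(E_*,t\Phi)$ extends across $t=0$, with limit the origin of $\mathcal{H}$; the whole point of the lemma is to lift this extension back up to the moduli space, and properness is precisely the tool that prevents the orbit from escaping to infinity while its image in the base converges.

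Concretely, I would argue via the valuative criterion of properness. Let $R=\mathbb{C}[t]_{(t)}$ be the discrete valuation ring with fraction field $K=\mathbb{C}(t)$, and write $S=\operatorname{Spec}R$ with generic point $\eta=\operatorname{Spec}K$ and closed point $0$. The orbit determines a $K$-point $\eta\to\mathcal{M}_{\mathrm{Higgs}}(r,d,\alpha)$, namely the semistable parabolic Higgs bundle $(E_*,t\Phi)$ regarded over $K$ (semistability along the orbit being exactly the invariance of stability under the $\mathbb{C}^*$-action recorded earlier). By Lemma \ref{equivariant} its image in $\mathcal{H}$ is $(ts_1,\dots,t^rs_r)$, which is the restriction to $\eta$ of the $R$-point $S\to\mathcal{H}$ given by the same formula and regular at $0$, taking the value $0$ there. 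This yields a commutative square whose two maps to $\mathcal{H}$ agree over $\eta$, so the properness of $h$ supplies a unique lift $S\to\mathcal{M}_{\mathrm{Higgs}}(r,d,\alpha)$. The image of the closed point under this lift is, by construction, the sought limit $\lim_{t\to 0}(E_*,t\Phi)$, which therefore exists in $\mathcal{M}_{\mathrm{Higgs}}(r,d,\alpha)$.

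The step requiring the most care is the identification of this algebraic limit, produced by the valuative criterion, with the topological limit $\lim_{t\to 0}(t\cdot x)$ appearing in Definition \ref{semiprojective}. This matching is standard for $\mathbb{C}^*$-actions on separated finite-type schemes, where the limit of an orbit, when it exists, is unique and coincides with the specialization of the orbit; uniqueness here is guaranteed because $\mathcal{M}_{\mathrm{Higgs}}(r,d,\alpha)$ is quasi-projective, hence separated. I would finally record that the resulting limit point is automatically a fixed point of the $\mathbb{C}^*$-action and is again a semistable parabolic Higgs bundle, since by construction it lies in the moduli space; this fact will be reused when analyzing the fixed-point locus required for the second condition of semiprojectivity.
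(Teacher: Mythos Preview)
Your argument is correct and follows essentially the same route as the paper's proof: both use the $\mathbb{C}^*$-equivariance of the Hitchin map to show that $h$ applied to the orbit extends across $t=0$ in $\mathcal{H}$, and then invoke the valuative criterion of properness for $h$ to lift this extension to $\mathcal{M}_{\mathrm{Higgs}}(r,d,\alpha)$. Your version is slightly more explicit about the DVR involved and about matching the algebraic and topological notions of limit, but the underlying strategy is identical.
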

\begin{proof}
Consider the map 
\[
f: \mathbb{C}^* \longrightarrow \mathcal{M}_{\mathrm{Higgs}}(r,d,\alpha)
\]
given by $t \mapsto (E_*,t\Phi)$. By the above Lemma \ref{equivariant}, we know that $h$ is $\mathbb{C}^*$-equivariant. Therefore, we have
\[
\lim_{t\to 0} h\big((E_*,t\Phi)\big) = \lim_{t \to 0} t \cdot h\big((E_*,\Phi)\big) = 0.
\]
Thus, the composition map $F \coloneqq h \circ f : \mathbb{C}^* \longrightarrow \mathcal{H}$ extends to a map $\hat{F} : \mathbb{C} \longrightarrow \mathcal{H}$. Since $h$ is proper, by the valuative criterion of properness $f$ also extend to a map $$\hat{f}: \mathbb{C} \longrightarrow \mathcal{M}_{\mathrm{Higgs}}(r,d,\alpha).$$ Hence, the limit $\lim_{t\to 0} (E_*,t\Phi)$ exists in $\mathcal{M}_{\mathrm{Higgs}}(r,d,\alpha)$.
\end{proof}

\begin{lemma}\label{fixed1}
The fixed point locus under the $\mathbb{C}^*$-action on $\mathcal{M}_{\mathrm{Higgs}}(r,d,\alpha)$ is proper in $h^{-1}(0) \subset \mathcal{M}_{\mathrm{Higgs}}(r,d,\alpha)$.
\end{lemma}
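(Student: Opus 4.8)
The plan is to first show that the fixed point locus is contained in the nilpotent cone $h^{-1}(0)$, and then to deduce properness directly from the properness of the Hitchin map $h$.

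First I would exploit the $\mathbb{C}^*$-equivariance of $h$ proved in Lemma \ref{equivariant}. Suppose $(E_*,\Phi)$ represents a point of the fixed point locus, so that $(E_*,t\Phi)\cong(E_*,\Phi)$ in $\mathcal{M}_{\mathrm{Higgs}}(r,d,\alpha)$ for every $t\in\mathbb{C}^*$. Applying $h$ and using equivariance gives $h(E_*,\Phi)=h\big(t\cdot(E_*,\Phi)\big)=t\cdot h(E_*,\Phi)$, so the image $h(E_*,\Phi)=(s_1,\dots,s_r)$ is fixed by the weighted action $t\cdot(s_1,\dots,s_r)=(ts_1,t^2s_2,\dots,t^rs_r)$ on $\mathcal{H}$. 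The equations $t^is_i=s_i$ for all $t\in\mathbb{C}^*$ force $s_i=0$ for every $i$, so $h(E_*,\Phi)=0$. Hence the fixed point locus is contained in $h^{-1}(0)$.

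Next I would record that $h^{-1}(0)$ is complete: since $h$ is proper (as noted after the definition of the Hitchin fibration, following \cite{M94}), the fiber over the closed point $0\in\mathcal{H}$ is proper over a point, hence a complete variety. Finally, the fixed point locus of a $\mathbb{C}^*$-action on a separated variety is closed, so $\mathcal{M}_{\mathrm{Higgs}}(r,d,\alpha)^{\mathbb{C}^*}$ is a closed subvariety of $\mathcal{M}_{\mathrm{Higgs}}(r,d,\alpha)$; combined with the inclusion into $h^{-1}(0)$ from the previous step, it is a closed subvariety of the complete variety $h^{-1}(0)$, and a closed subvariety of a proper variety is proper. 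This gives the claim.

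The argument is essentially formal once the containment in $h^{-1}(0)$ is in hand, and I expect no serious obstacle: the only point requiring care is the passage from a fixed point in the moduli space to a fixed point on the Hitchin base, which relies on reading \emph{fixed} as an isomorphism of parabolic Higgs bundles rather than a literal equality, together with the elementary fact that the weighted $\mathbb{C}^*$-action on $\mathcal{H}$ has the origin as its unique fixed point.
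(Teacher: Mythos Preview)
Your argument is correct and follows essentially the same route as the paper: both use the $\mathbb{C}^*$-equivariance of $h$ together with the observation that $0$ is the unique fixed point in $\mathcal{H}$ to place the fixed locus inside $h^{-1}(0)$, and then conclude properness from the properness of $h$. You simply spell out a few steps (the equations $t^i s_i = s_i$, and the closedness of fixed loci) more explicitly than the paper does.
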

\begin{proof}
Note that the only element that is fixed under the $\mathbb{C}^*$-action on the Hitchin base $\mathcal{H}$ is the zero point. Therefore, the fixed point locus $\mathcal{H}^{\mathbb{C}^*}$ is exactly the set $\{0\}$. Since the Hitchin fibration $h$ is $\mathbb{C}^*$-equivariant, the fixed point locus $\mathcal{M}_{\mathrm{Higgs}}(r,d,\alpha)^{\mathbb{C}^*}$ must be closed in $h^{-1}(\mathcal{H}^{\mathbb{C}^*}) = h^{-1}(0)$. Since $h$ is proper, so is $h^{-1}(0)$. Hence, $\mathcal{M}_{\mathrm{Higgs}}(r,d,\alpha)^{\mathbb{C}^*}$ is also proper in $h^{-1}(0)$.
\end{proof}
\begin{prop}
The moduli space $\mathcal{M}_{\mathrm{Higgs}}(r,d,\alpha)$ is a smooth semiprojective complex variety.
\end{prop}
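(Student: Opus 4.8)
The plan is to assemble the three lemmas just proved into a verification of the two defining conditions of Definition \ref{semiprojective}, and to separately record smoothness. Since the moduli space is quasi-projective by construction and already carries the standard $\mathbb{C}^*$-action $t\cdot(E_*,\Phi)=(E_*,t\Phi)$, the proposition will follow once these three ingredients are in place.

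For smoothness I would simply invoke the discussion in the preliminaries on parabolic Higgs bundles: under the standing assumption that the weights $\alpha$ are generic, semistability coincides with stability, and the stable locus is exactly the smooth locus of $\mathcal{M}_{\mathrm{Higgs}}(r,d,\alpha)$. Hence the entire moduli space is smooth, and it only remains to check the two conditions of semiprojectivity.

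The first condition, namely that $\lim_{t\to 0}(t\cdot x)$ exists in $\mathcal{M}_{\mathrm{Higgs}}(r,d,\alpha)$ for every point $x$, is precisely Lemma \ref{limit1}. Its proof rests on the $\mathbb{C}^*$-equivariance of the Hitchin map (Lemma \ref{equivariant}), the observation that the induced path in the Hitchin base $\mathcal{H}$ limits to $0$ as $t\to 0$, and the properness of $h$ applied through the valuative criterion of properness. The second condition, properness of the fixed point locus $\mathcal{M}_{\mathrm{Higgs}}(r,d,\alpha)^{\mathbb{C}^*}$, is supplied by Lemma \ref{fixed1}, which shows that this locus is closed inside $h^{-1}(0)$; since $h$ is proper, the fiber $h^{-1}(0)$ is proper, and a closed subscheme of a proper scheme is proper.

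I do not anticipate any real obstacle here, as the substantive work has already been carried out in Lemmas \ref{equivariant}, \ref{limit1}, and \ref{fixed1}; what remains is essentially bookkeeping. The one point I would state carefully is that ``proper in $h^{-1}(0)$'' in Lemma \ref{fixed1} upgrades to ``proper'' in the absolute sense, which is immediate because $h^{-1}(0)$ is itself proper. With smoothness established and both conditions of Definition \ref{semiprojective} verified, the proposition follows.
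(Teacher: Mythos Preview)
Your proposal is correct and follows essentially the same approach as the paper: invoke smoothness from the genericity of the weights (so that semistable equals stable), and then verify the two conditions of Definition \ref{semiprojective} using Lemma \ref{limit1} for the existence of limits and Lemma \ref{fixed1} for the properness of the fixed locus. Your added remark that properness inside $h^{-1}(0)$ upgrades to absolute properness is a helpful clarification, but otherwise the argument matches the paper's proof exactly.
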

\begin{proof}
We know that for generic weights, the moduli space $\mathcal{M}_{\mathrm{Higgs}}(r,d,\alpha)$ is a smooth quasi-projective complex variety. Therefore the semiprojectivity of $\mathcal{M}_{\mathrm{Higgs}}(r,d,\alpha)$ follows from Lemma \ref{limit1} and \ref{fixed1}.
\end{proof}

\subsection{Semiprojectivity of parabolic Hodge moduli space}\label{HodgeSemiProj}
Recall that the $\mathbb{C}^*$-action on the moduli space $\mathcal{M}_\mathrm{Hod}(r,d,\alpha)$ is given by
\[
t\cdot (E_*,\lambda,\nabla) = (E_*,t\lambda,t\nabla).
\]
To prove the semiprojectivity of $\mathcal{M}_\mathrm{Hod}(r,d,\alpha)$ we need to check that this $\mathbb{C}^*$-action satisfies the two properties given in the Definition \ref{semiprojective}.

The following proposition is the statement of the Theorem $10.1$ in \cite{S97}. For the convenience of the reader, we give the full statement.

\begin{prop}\cite[Theorem $10.1$]{S97}
    Suppose $S=\mathrm{Spec}(A)$ where $A$ is a discrete valuation ring with fraction field $K$ and residue field $A/m=C$. Let $\eta$ denote the generic point and $s$ the closed point of $S$. Suppose $Y\to S$ is a projective morphism of schemes with relatively very ample $\mathcal{O}(1)$ on $Y$. Suppose $\Lambda$ is a split almost polynomial sheaf of rings of differential operators on $Y/S$ as in \cite{S94}. Suppose $F$ is a sheaf of $\Lambda$-modules on $Y$ which is relatively of pure dimension $d$, flat over $S$, and such that the generic fiber $F_\eta$ is semistable. Then there exists a sheaf of $\Lambda$-modules $F'$ on $Y$ which is relatively of pure dimension $d$, flat over $S$, and such that $F'_\eta \cong F_\eta$ and also $F'_s$ is semistable.
\end{prop}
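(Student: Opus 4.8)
The plan is to adapt Langton's proof of semistable reduction to the category of $\Lambda$-modules. The underlying idea is that semistability of the special fiber can be forced by a sequence of \emph{elementary modifications} concentrated over the closed point $s$, each of which preserves the generic fiber $F_\eta$; the real work is to show that this procedure terminates.

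First I would dispose of the trivial case: if $F_s$ is already semistable, take $F' = F$. So assume $F_s$ is not semistable. Fix a uniformizer $\pi \in m \subset A$, so that $F_s = F/\pi F$. Because $\Lambda$ is split almost polynomial, the category of $\Lambda$-modules on $Y_s$ that are relatively of pure dimension $d$ carries a good notion of (reduced Hilbert polynomial) semistability: maximal destabilizing sub-objects and Harder--Narasimhan filtrations exist and are $\Lambda$-stable, and the semistable objects with fixed numerical invariants form a bounded family. Let $B \subseteq F_s$ be the maximal destabilizing $\Lambda$-submodule.

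Next comes the modification step. Define
$$ F' \;:=\; \ker\bigl(F \twoheadrightarrow F_s \twoheadrightarrow F_s/B\bigr), $$
which is again a sheaf of $\Lambda$-modules because $B$ is a $\Lambda$-submodule, and which satisfies $\pi F \subseteq F' \subseteq F$. One verifies directly that $F'$ is flat over $S$, relatively of pure dimension $d$, and that $F'_\eta \cong F_\eta$, since the modification is supported on $s$. The pair of short exact sequences
$$ 0 \to B \to F_s \to F_s/B \to 0, \qquad 0 \to F_s/B \to F'_s \to B \to 0 $$
records how the special fiber is altered: the destabilizing part $B$ is moved from the top of $F_s$ to the bottom of $F'_s$. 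Iterating produces a sequence of $\Lambda$-modules $F^{(i)}$ with $\pi F^{(i)} \subseteq F^{(i+1)} \subseteq F^{(i)}$, all sharing the generic fiber $F_\eta$.

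The main obstacle is proving that this sequence terminates in a semistable special fiber; this is exactly Langton's key lemma and is where I expect the difficulty to lie. Following his strategy, I would track the maximal destabilizing submodules $B_i \subseteq F^{(i)}_s$ together with their reduced Hilbert polynomials (equivalently the maximal slope $\mu_{\max}$ of the Harder--Narasimhan filtration), show that these invariants are non-increasing along the sequence, and argue that if the process never terminated then the images of the $B_i$ would stabilize---using the boundedness of $\Lambda$-semistable modules with controlled invariants---to a $\Lambda$-submodule of $F_\eta$ contradicting the assumed semistability of $F_\eta$. The two points genuinely special to the $\Lambda$-module setting, namely the $\Lambda$-stability of the Harder--Narasimhan filtration and the boundedness statement, both rest on the hypothesis that $\Lambda$ is split almost polynomial, which lets one compare $\Lambda$-semistability with ordinary semistability of the underlying $\mathcal{O}_Y$-module and thereby import Langton's and Maruyama's boundedness results.
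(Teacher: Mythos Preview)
The paper does not prove this proposition at all: it is simply quoted verbatim from \cite[Theorem~10.1]{S97} ``for the convenience of the reader'' and then applied to establish the existence of limits in $\mathcal{M}_{\mathrm{Hod}}(r,d,\alpha)$. So there is no proof in the paper to compare your attempt against.

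That said, your sketch is exactly the strategy Simpson uses in \cite{S97}: Langton's elementary-modification argument transported to the setting of $\Lambda$-modules, with the Harder--Narasimhan filtration known to be $\Lambda$-stable and the termination coming from boundedness. Your identification of the two $\Lambda$-specific ingredients (HN filtration is preserved by $\Lambda$, and boundedness via comparison with $\mathcal{O}_Y$-semistability) is accurate. The only caveat is that the termination step is genuinely delicate even in the classical case, and your outline of it (``the images of the $B_i$ would stabilize \ldots\ to a $\Lambda$-submodule of $F_\eta$'') compresses several pages of Langton/Simpson into one sentence; a referee would ask you to spell out how the $B_i$ assemble over $\mathrm{Spec}(A/\pi^n)$ and why flatness forces a destabilizing subsheaf of $F_\eta$. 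But as a plan it is correct and matches the cited source.
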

 We will use this result to prove the following Lemma.
\begin{lemma}\label{limit2}
Let $(E_*,\lambda,\nabla) \in \mathcal{M}_\mathrm{Hod}(r,d,\alpha)$ be a semistable parabolic $\lambda$-connection. Then the limit $\lim_{t\to 0} (E_*,t\lambda,t\nabla)$ exists in $\mathrm{pr}^{-1}(0) \subset \mathcal{M}_\mathrm{Hod}(r,d,\alpha)$.
\end{lemma}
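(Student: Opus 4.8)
The plan is to mirror the proof strategy already used for the Higgs case in Lemma \ref{limit1}, adapting it to the Hodge moduli space by invoking the Simpson semistability-preservation theorem (the Proposition quoted from \cite[Theorem $10.1$]{S97}) in place of the valuative criterion applied to a proper Hitchin map. The key structural fact is the projection $\mathrm{pr} : \mathcal{M}_\mathrm{Hod}(r,d,\alpha) \to \mathbb{C}$ sending $(E_*,\lambda,\nabla) \mapsto \lambda$, together with the $\mathbb{C}^*$-action $t\cdot(E_*,\lambda,\nabla) = (E_*,t\lambda,t\nabla)$. Since $\mathrm{pr}$ is $\mathbb{C}^*$-equivariant with respect to the standard scaling action on $\mathbb{C}$, we have $\mathrm{pr}(t\cdot(E_*,\lambda,\nabla)) = t\lambda$, so $\lim_{t\to 0}\mathrm{pr}(t\cdot(E_*,\lambda,\nabla)) = 0$. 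This tells us that the limit, \emph{if} it exists, must lie in the fiber $\mathrm{pr}^{-1}(0) = \mathcal{M}_{\mathrm{Higgs}}(r,d,\alpha)$, which is the content the Lemma asserts. So the geometric target of the limit is forced; the real work is to produce the limit point itself.

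First I would set up the orbit map $g : \mathbb{C}^* \to \mathcal{M}_\mathrm{Hod}(r,d,\alpha)$ given by $t \mapsto (E_*,t\lambda,t\nabla)$ and reformulate the existence of the limit as an extension problem over the punctured disc. The natural framework is the valuative criterion: let $S = \mathrm{Spec}(A)$ with $A$ the local ring of $\mathbb{C}$ at $0$ (a discrete valuation ring with fraction field $K = \mathbb{C}(t)$ and residue field $\mathbb{C}$), and regard the family $(E_*,t\lambda,t\nabla)$ over the generic point $\eta$ as a flat family of $\Lambda$-modules, where $\Lambda$ is Simpson's sheaf of rings of differential operators encoding the $\lambda$-connection structure (the Rees-algebra formalism of \cite{S94}, \cite{S97}). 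The family over $\eta$ is semistable by hypothesis. The quoted Proposition then furnishes a replacement family $F'$, flat over $S$, with $F'_\eta \cong F_\eta$ and \emph{semistable} closed fiber $F'_s$; this semistable closed fiber is precisely a point of the moduli space over the residue field, giving the desired limit object in $\mathcal{M}_\mathrm{Hod}(r,d,\alpha)$.

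The main obstacle, and the step I would treat most carefully, is checking that the hypotheses of \cite[Theorem $10.1$]{S97} genuinely apply to the family $t\mapsto(E_*,t\lambda,t\nabla)$: namely that the $\lambda$-connection data assemble into a flat family of $\Lambda$-modules of pure dimension over $S$ with the required relative very-ample polarization, and that scaling by $t$ is compatible with the split almost-polynomial structure on $\Lambda$. Concretely, the parabolic $\lambda$-connection corresponds to a module over the Rees algebra interpolating between $\mathcal{O}_X$-modules (at $\lambda=0$) and $\mathcal{D}_X$-modules, and one must verify that multiplying both $\lambda$ and $\nabla$ by $t$ is exactly the degeneration governed by the $\mathbb{C}^*$-action on this Rees construction, so that the $t\to 0$ limit lands in the Higgs (i.e. $\lambda=0$) stratum. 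Once the identification with Simpson's setup is in place, the semistability of the limit is handed to us by the Proposition, and the earlier equivariance computation pins the limit in $\mathrm{pr}^{-1}(0)$. Finally I would note that uniqueness of the limit in the separated moduli space, combined with the valuative-criterion extension, closes the argument and establishes that $\lim_{t\to 0}(E_*,t\lambda,t\nabla)$ exists in $\mathrm{pr}^{-1}(0) \subset \mathcal{M}_\mathrm{Hod}(r,d,\alpha)$.
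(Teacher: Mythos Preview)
Your proposal is correct and follows essentially the same approach as the paper: both invoke Simpson's semistability-preservation result \cite[Theorem~10.1]{S97} to extend the $\mathbb{C}^*$-family $t\mapsto(E_*,t\lambda,t\nabla)$ across $t=0$ with a semistable closed fiber, and both observe that $\mathbb{C}^*$-equivariance of $\mathrm{pr}$ forces the limit into $\mathrm{pr}^{-1}(0)$. The paper's proof is slightly more concrete in that it writes down the pullback family $(\pi_1^*E_*,\,t\lambda,\,t\,\pi_1^*\nabla)$ on $X\times\mathbb{C}^*$ explicitly before applying Simpson's theorem, whereas you phrase the same content in the abstract DVR/valuative-criterion language, but the substance is identical.
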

\begin{proof}
The proof is similar to \cite[Corollary 10.2]{S97}. Consider the following to projections
\[
\pi_1 : X \times \mathbb{C}^* \longrightarrow X \hspace{0.3cm} \mathrm{and} \hspace{0.3cm} \pi_2 : X \times \mathbb{C} \longrightarrow \mathbb{C}.
\]
Now consider the $\mathbb{C}^*$-flat family over $\pi_2: X \times \mathbb{C} \longrightarrow \mathbb{C}$ given by
\[
(\mathcal{E},t\lambda,\nabla_{\pi_2}) \coloneqq (\pi_1^*E_*,t\lambda,t\pi_1^*\nabla)
\]
For any $t\ne 0$, we know that a parabolic $t\lambda$-connection $(E_*,t\lambda,t\nabla)$ is semistable if and only if the parabolic $\lambda$-connection $(E_*,\lambda,\nabla)$ is semistable. Therefore, the fibers of the above family are semistable for $t\ne 0$. Following \cite[Theorem 10.1]{S97}, there exist a $\mathbb{C}$-flat family $(\overline{\mathcal{E}},\overline{t\lambda},\overline{\nabla_{\pi_2}})$ over $\pi_2 : X \times \mathbb{C} \longrightarrow \mathbb{C}$ such that 
\[
\left.(\overline{\mathcal{E}},\overline{t\lambda},\overline{\nabla_{\pi_2}})\right|_{X\times \mathbb{C}^*} \cong (\pi_1^*E_*,t\lambda,t\pi_1^*\nabla)
\]
and $\left.(\overline{\mathcal{E}},\overline{t\lambda},\overline{\nabla_{\pi_2}})\right|_{X\times \{0\}}$ is semistable. Therefore,
\[
\left.(\overline{\mathcal{E}},\overline{t\lambda},\overline{\nabla_{\pi_2}})\right|_{X\times \{0\}} \in \mathrm{pr}^{-1}(0)
\]
is the limit of the $\mathbb{C}^*$-orbit of $(E_*,\lambda,\nabla)$ at $t=0$ in the moduli space $\mathcal{M}_\mathrm{Hod}(r,d,\alpha)$.
\end{proof}

\begin{lemma}\label{fixed2}
The fixed point locus under the $\mathbb{C}^*$-action on the Hodge moduli $\mathcal{M}_{\mathrm{Hod}}(r,d,\alpha)$ is proper in $\mathcal{M}_{\mathrm{Hod}}(r,d,\alpha)$.
\end{lemma}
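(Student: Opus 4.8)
The plan is to reduce the computation of the fixed locus on $\mathcal{M}_\mathrm{Hod}(r,d,\alpha)$ to the Higgs case already settled in Lemma \ref{fixed1}, by exploiting the equivariance of the projection $\mathrm{pr}$ from \eqref{submersion}. First I would observe that $\mathrm{pr}$ is $\mathbb{C}^*$-equivariant once we let $\mathbb{C}^*$ act on the target $\mathbb{C}$ by scalar multiplication $t\cdot\lambda=t\lambda$: indeed, from the definition of the action \eqref{action2},
\[
\mathrm{pr}\big(t\cdot(E_*,\lambda,\nabla)\big)=\mathrm{pr}(E_*,t\lambda,t\nabla)=t\lambda=t\cdot\mathrm{pr}(E_*,\lambda,\nabla).
\]

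Next I would use that the fixed-point locus of the scalar action of $\mathbb{C}^*$ on $\mathbb{C}$ is exactly $\{0\}$. By equivariance, any fixed point $(E_*,\lambda,\nabla)\in\mathcal{M}_\mathrm{Hod}(r,d,\alpha)^{\mathbb{C}^*}$ must satisfy $\mathrm{pr}(E_*,\lambda,\nabla)=\lambda=0$, so that
\[
\mathcal{M}_\mathrm{Hod}(r,d,\alpha)^{\mathbb{C}^*}\subseteq \mathrm{pr}^{-1}(0)=\mathcal{M}_{\mathrm{Higgs}}(r,d,\alpha).
\]

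The key step is then to identify the restricted action with the standard Higgs action. On the fiber $\mathrm{pr}^{-1}(0)$ the action \eqref{action2} becomes $t\cdot(E_*,0,\nabla)=(E_*,0,t\nabla)$, which under the identification of parabolic $0$-connections with strongly parabolic Higgs bundles is precisely the action $t\cdot(E_*,\Phi)=(E_*,t\Phi)$ on $\mathcal{M}_{\mathrm{Higgs}}(r,d,\alpha)$. Hence the two fixed loci coincide,
\[
\mathcal{M}_\mathrm{Hod}(r,d,\alpha)^{\mathbb{C}^*}=\mathcal{M}_{\mathrm{Higgs}}(r,d,\alpha)^{\mathbb{C}^*},
\]
and I would invoke Lemma \ref{fixed1}, which shows this locus is proper (being closed in the proper fiber $h^{-1}(0)$), to conclude that $\mathcal{M}_\mathrm{Hod}(r,d,\alpha)^{\mathbb{C}^*}$ is proper in $\mathcal{M}_\mathrm{Hod}(r,d,\alpha)$.

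I expect the genuinely delicate point to be this middle identification: one must check not merely the set-theoretic containment in the central fiber but that the induced variety structure and the $\mathbb{C}^*$-action on $\mathrm{pr}^{-1}(0)$ match those used in Lemma \ref{fixed1}, so that properness transfers verbatim. The equivariance of $\mathrm{pr}$ and the containment in $\mathrm{pr}^{-1}(0)$ are formal, and once the two actions on $\mathrm{pr}^{-1}(0)$ are matched, properness is immediate from the already-established Lemma \ref{fixed1}.
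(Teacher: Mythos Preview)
Your proposal is correct and follows exactly the same route as the paper: observe that $\mathrm{pr}$ is $\mathbb{C}^*$-equivariant so the fixed locus lies in $\mathrm{pr}^{-1}(0)=\mathcal{M}_{\mathrm{Higgs}}(r,d,\alpha)$, identify the induced action there with the standard Higgs action, and invoke Lemma~\ref{fixed1}. The paper's proof is just a terser version of what you wrote.
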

\begin{proof}
The fixed point locus under the $\mathbb{C}^*$-action $t\cdot (E_*,\lambda,\nabla) = (E_*,t\lambda,t\nabla)$ is exactly corresponds to the fixed point locus under the $\mathbb{C}^*$-action on $\mathrm{pr}^{-1}(0) = \mathcal{M}_{\mathrm{Higgs}}(r,d,\alpha)$. Therefore by Lemma \ref{fixed1}, the fixed point locus $\mathcal{M}_{\mathrm{Hod}}(r,d,\alpha)^{\mathbb{C}^*}$ is proper.
\end{proof}

We then generalize a result of Simpson \cite{S97}, see also Hausel-Thaddeus \cite{HT03}, Hausel-Rodriguez \cite{HR08}.

\begin{prop}\label{important}
The moduli space $\mathcal{M}_\mathrm{Hod}(r,d,\alpha)$ is a smooth semiprojective complex variety.

Moreover, the algebraic map $\mathrm{pr} : \mathcal{M}_{\mathrm{Hod}}(r,d,\alpha) \to \mathbb{C}$ given in (\ref{submersion}) is a $\mathbb{C}^*$-equivariant surjective submersion covering the scaling action on $\mathbb{C}$.
\end{prop}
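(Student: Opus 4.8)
The plan is to treat the two assertions separately, deducing semiprojectivity from the lemmas already in place and then analysing the differential of $\mathrm{pr}$. For the first assertion I would simply assemble the ingredients. The variety $\mathcal{M}_\mathrm{Hod}(r,d,\alpha)$ is smooth and quasi-projective for generic $\alpha$ by \cite{A17}, so it remains only to verify the two conditions of Definition \ref{semiprojective} for the action $t\cdot(E_*,\lambda,\nabla)=(E_*,t\lambda,t\nabla)$. Condition (1), existence of $\lim_{t\to 0}(E_*,t\lambda,t\nabla)$ in $\mathcal{M}_\mathrm{Hod}(r,d,\alpha)$, is exactly Lemma \ref{limit2}; condition (2), properness of the fixed-point locus, is Lemma \ref{fixed2}. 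Together these give semiprojectivity immediately.

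For the second assertion I would check equivariance, surjectivity, and the submersion property in turn. Equivariance covering the scaling action is a one-line computation, $\mathrm{pr}(t\cdot(E_*,\lambda,\nabla))=\mathrm{pr}(E_*,t\lambda,t\nabla)=t\lambda=t\cdot\mathrm{pr}(E_*,\lambda,\nabla)$. Surjectivity follows because the image contains $0$ (as $\mathrm{pr}^{-1}(0)=\mathcal{M}_{\mathrm{Higgs}}(r,d,\alpha)$ is nonempty) and $1$ (as $\mathrm{pr}^{-1}(1)=\mathcal{M}_{\mathrm{pc}}(r,d,\alpha)$ is nonempty), while by equivariance the image is $\mathbb{C}^*$-invariant and hence equals all of $\mathbb{C}$.

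The substance is the submersion property, namely that $d\mathrm{pr}_y$ is onto $T_{\mathrm{pr}(y)}\mathbb{C}\cong\mathbb{C}$ for every $y$. Writing $D:=2r^2(g-1)+2+n(r^2-r)$ for the common dimension of $\mathcal{M}_{\mathrm{Higgs}}(r,d,\alpha)$ and $\mathcal{M}_{\mathrm{pc}}(r,d,\alpha)$, I would use that $\mathcal{M}_\mathrm{Hod}(r,d,\alpha)$ is smooth of dimension $D+1$, so that $\operatorname{rank} d\mathrm{pr}_y=(D+1)-\dim\ker d\mathrm{pr}_y$, where $\ker d\mathrm{pr}_y$ is the Zariski tangent space of the scheme-theoretic fibre through $y$. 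Thus it suffices to show each scheme-theoretic fibre is smooth of dimension $D$. Over $\mathbb{C}^*$ this is immediate: the $\mathbb{C}^*$-action trivialises $\mathrm{pr}$ via $(t,(E_*,1,\nabla))\mapsto(E_*,t,t\nabla)$, giving $\mathrm{pr}^{-1}(\mathbb{C}^*)\cong\mathbb{C}^*\times\mathcal{M}_{\mathrm{pc}}(r,d,\alpha)$ with $\mathrm{pr}$ the first projection, whose fibres are the smooth variety $\mathcal{M}_{\mathrm{pc}}(r,d,\alpha)$.

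The main obstacle is precisely $\lambda=0$: I must show the scheme-theoretic fibre $\mathrm{pr}^{-1}(0)$ is reduced, i.e. coincides as a scheme with the smooth variety $\mathcal{M}_{\mathrm{Higgs}}(r,d,\alpha)$ rather than a nonreduced thickening of it. This is the scheme-theoretic refinement of the set-theoretic identity $\mathrm{pr}^{-1}(0)=\mathcal{M}_{\mathrm{Higgs}}(r,d,\alpha)$ already recorded, and it follows from the identification of a parabolic $0$-connection with a strongly parabolic Higgs bundle at the level of moduli functors in the construction of \cite{A17}. Granting this, $\mathrm{pr}^{-1}(0)$ is smooth of dimension $D$, so the rank computation forces $d\mathrm{pr}_y$ to be surjective for $y$ over $0$ as well; alternatively one may invoke miracle flatness, since a smooth total space mapping to the regular curve $\mathbb{C}$ with equidimensional fibres is flat, and then use smoothness of the fibres. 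Combining the two ranges of $\lambda$ shows $\mathrm{pr}$ has no critical points. As a structural check that the difficulty is genuinely concentrated at the origin, note that $\mathbb{C}^*$-equivariance makes the critical locus, and hence the set of critical values, $\mathbb{C}^*$-invariant; by generic smoothness in characteristic $0$ this set is not dense, and the only non-dense $\mathbb{C}^*$-invariant subsets of $\mathbb{C}$ are contained in $\{0\}$, so indeed all possible failure of submersion sits over $\lambda=0$.
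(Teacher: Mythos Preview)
Your proof is correct and, for the semiprojectivity claim, uses exactly the same ingredients as the paper (smoothness from \cite{A17} together with Lemmas \ref{limit2} and \ref{fixed2}). For the second claim the paper simply asserts that it ``follows immediately from the smoothness property'' of $\mathcal{M}_{\mathrm{Hod}}(r,d,\alpha)$, so your careful verification of equivariance, surjectivity, and the rank of $d\mathrm{pr}$ via fibre dimensions is a fleshing-out of that one-line assertion rather than a genuinely different route.
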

\begin{proof}
Since weights are generic, the moduli space $\mathcal{M}_{\mathrm{Hod}}(r,d,\alpha)$ is a smooth quasiprojective complex variety. Therefore, Lemma \ref{limit2} and \ref{fixed2} implies that the moduli space $\mathcal{M}_{\mathrm{Hod}}(r,d,\alpha)$ is semiprojective.

The second part follows immediately from the smoothness property of the moduli space $\mathcal{M}_{\mathrm{Hod}}(r,d,\alpha)$.
\end{proof}

\section{Grothendieck motives of semiprojective varieties}
In this section we recall the Grothendieck ring of varieties and some basic properties. We also define what we mean by the Grothendieck motive.

\subsection{Grothendieck ring of varieties}
Let $\mathcal{V}_\mathbb{C}$ denote the category of quasiprojective complex varieties. We also denote by $[Z]$ the isomorphism class corresponding to an element $Z \in \mathcal{V}_\mathbb{C}$. Let $Z' \subset Z$ be a Zariski-closed subset of $Z$. Let $G$ be the quotient group coming from the free abelian group generated by the isomorphism classes $[Z]$, modulo the relation
\[
[Z] =[Z']+[Z\setminus Z'].
\]
In this group $G$, the additive structure is given by
\[
[Z_1]+[Z_2]\coloneqq [Z_1 \sqcup Z_2],
\]
where $\sqcup$ denotes the disjoint union and the multiplicative structure is defined by
\[
[Z_1]\cdot [Z_2]\coloneqq [Z_1 \times Z_2].
\]
Therefore we get a commutative ring $(G,+,\cdot)$, called the \textit{Grothendieck ring of varieties}. We will denote this ring by $K(\mathcal{V}_{\mathbb{C}})$. The additive and multiplicative units of $K(\mathcal{V}_{\mathbb{C}})$ are $0=[\emptyset]$ and $1=[\mathrm{Spec}(\mathbb{C})]$ respectively.

Consider the affine line $\mathbb{A}^1$. The class of $\mathbb{A}^1$ is called the \textit{Lefschetz object}, denoted by
\[
\mathbb{L} \coloneqq [\mathbb{A}^1]=[\mathbb{C}].
\]
Therefore, 
\[
\mathbb{L}^n = [\mathbb{A}^n]=[\mathbb{C}^n].
\]
Let $K(\mathcal{V}_{\mathbb{C}})[\mathbb{L}^{-1}]$ be the localization of $K(\mathcal{V}_{\mathbb{C}})$ and let 
\[
\hat{K}(\mathcal{V}_{\mathbb{C}})=\Bigg\{\sum_{k \geq 0}[Z_k]\mathbb{L}^{-k} \hspace{0.2cm} \Bigg| \hspace{0.2cm} [Z_k]\in K(\mathcal{V}_{\mathbb{C}}) \hspace{0.1cm} \mathrm{with} \hspace{0.1cm} \dim{Z_k} - k \longrightarrow -\infty \Bigg\}
\]
be the dimensional completion of $K(\mathcal{V}_{\mathbb{C}})$. 

Throughout this paper, by the Grothendieck motive we mean
\begin{definition}
let $Z$ be a quasiprojective complex variety. The class $[Z] \in K(\mathcal{V}_{\mathbb{C}})$ or in $\hat{K}(\mathcal{V}_{\mathbb{C}})$ is called the \textit{Grothendieck motive}, or just the \textit{motive} of $Z$.
\end{definition}

\subsection{Mixed Hodge structure and $E$-polynommial}
Let $d=\dim(Z)$ be the dimension of a quasiprojective complex variety $Z$. In \cite{D71}, Deligne proved that the compactly supported $k$-th cohomology $H^k_c(Z)\coloneqq H^k_c(Z,\mathbb{C})$ is equipped with a mixed Hodge structure for all $k\in \{0,\dots , 2d\}$. Also, $H^k_c(Z)$ is endowed with two filtrations $W^\bullet$ and $F_\bullet$ and that allow us to define the corresponding Hodge numbers
\[
h^{k,p,q}(Z) \coloneqq \dim H^{p,q}(H^k_c(Z,\mathbb{C})) = \dim\mathrm{Gr}^p_F\mathrm{Gr}^W_{p+q}(H^k_c(Z,\mathbb{C})),
\]
where $p,q \in \{0,\dots, k\}$. If $h^{k,p,q}(Z) \ne 0$, then we say that $(p,q)$ are $k$-weights of $Z$. It can be easily verify that the mixed Hodge numbers satisfy $h^{k,p,q}(Z)=h^{k,q,p}(Z)$ and $\dim H^k_c(Z) = \sum_{p,q=0}^{d} h^{k,p,q}(Z)$. Define
\[
\mathcal{X}^{p,q}(Z) \coloneqq \sum_k(-1)^kh^{k,p,q}(Z).
\]
Then the $E$\textit{-polynomial} of $Z$ is defined by
\[
E(Z)=E(Z;u,v) = \sum_{p,q=0} \mathcal{X}^{p,q}(Z)u^pv^q \in \mathbb{Z}[u,v].
\]
Notice that $E(Z;1,1)=\chi(Z)$ is the Euler characteristic of $Z$. So the $E$-polynomial is a generalization of the Euler characteristic. 

The $E$-polynomial satisfies the following properties
\begin{enumerate}
    \item \textit{(scissor relation)} $E(Z) = E(V) + E(Z\setminus V)$ for  a closed subvariety $V \subset Z$,
    \item \textit{(multiplicativity)} $E(Y \times Z) = E(Y)\cdot E(Z)$ where $Y \times Z$ is the cartesian product,
    \item If $Z \to Y$ is an algebraic fibre bundle with fibre $B$, then $E(Z)=E(Y)\cdot E(B)$.
\end{enumerate}

\begin{examples}
\begin{align*}
    &\bullet \hspace{0.1cm} E(\mathbb{C}) =E(\mathbb{A}^1)=E(\mathbb{P}^1) - E(\mathrm{pt})= uv \eqqcolon x,\\
    &\bullet \hspace{0.1cm} E(\mathbb{P}^n)=E(\mathbb{A}^n)+E(\mathbb{A}^{n-1})+\cdots + E(\mathbb{A}^1)+ E(\mathbb{A}^0)= x^n+x^{n-1}+\cdots + x + 1.
\end{align*}
\end{examples}
Now assume that $Z$ has pure Hodge structure, then its $E$-polynomial is given by
\begin{equation}\label{e-pol}
E(Z) = \sum_{p,q=0}^d (-1)^{p+q}h^{p,q}(Z)u^pv^q
\end{equation}
where $d=\dim Z$ and $h^{p,q}(Z)=\dim H^{p,q}_c(Z)$.

\begin{remark}\label{remark}
The $E$-polynomial can be realized as a ring homomorphism 
\[
E : \hat{K}(\mathcal{V}_{\mathbb{C}}) \longrightarrow \mathbb{Z}[u,v]
\]
from the Grothendieck ring of varieties to $\mathbb{Z}[u,v]$. This map extends to the completion 
\[
E : \hat{K}(\mathcal{V}_{\mathbb{C}}) \longrightarrow \mathbb{Z}[u,v] \Bigg\llbracket \frac{1}{uv} \Bigg\rrbracket
\]
(also denoted by $E$) taking values in the Laurent series in $uv$. Hence if two quasiprojective varieties have the same motive then their $E$-polynomials are the same.
\end{remark}

We will apply the following result for a smooth semiprojective complex variety in our set up.
\begin{prop}[\cite{AO21}, Theorem $5.6$]\label{mostimportant}
Let $Z$ be a smooth semiprojective complex variety endowed with a $\mathbb{C}^*$-equivarient surjective submersion $\pi : Z \to \mathbb{C}$ covering the standard scaling action on $\mathbb{C}$. Then the following motivic equalities hold in the Grothendieck ring $\hat{K}(\mathcal{V}_{\mathbb{C}})$,
\[
[\pi^{-1}(0)] = [\pi^{-1}(1)] \hspace{0.2cm} \mathrm{and} \hspace{0.2cm} [Z] = \mathbb{L}[\pi^{-1}(0)],
\]
where $\mathbb{L}$ is the Lefschetz motive.
\end{prop}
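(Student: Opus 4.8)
The plan is to extract both equalities from the Bialynicki--Birula (BB) stratification of $Z$ together with the scissor relation in $\hat K(\mathcal V_{\mathbb{C}})$, organized around the observation that $\pi$ being a $\mathbb{C}^*$-equivariant submersion covering the scaling action forces a clean dimension bookkeeping. Two preliminary remarks drive everything. First, since $\pi$ is equivariant and covers scaling, any $\mathbb{C}^*$-fixed point $z$ satisfies $\pi(z)=t\pi(z)$ for all $t\in\mathbb{C}^*$, hence $\pi(z)=0$; thus $Z^{\mathbb{C}^*}\subset\pi^{-1}(0)$. Second, over $\mathbb{C}^*$ the total space trivializes: the map $\mathbb{C}^*\times\pi^{-1}(1)\to\pi^{-1}(\mathbb{C}^*)$, $(t,z)\mapsto t\cdot z$, is an isomorphism with inverse $w\mapsto\bigl(\pi(w),\,\pi(w)^{-1}\cdot w\bigr)$, so that $[\pi^{-1}(\mathbb{C}^*)]=(\mathbb{L}-1)[\pi^{-1}(1)]$.

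For the equality $[Z]=\mathbb{L}[\pi^{-1}(0)]$, I would invoke the BB decomposition of the smooth variety $Z$: semiprojectivity guarantees that $\lim_{t\to 0}t\cdot z$ exists for every $z$ and that the fixed locus is a disjoint union of proper smooth components $F_i$, so that $Z=\bigsqcup_i Z_i^+$, where the attracting set $Z_i^+$ is a Zariski-locally trivial affine bundle over $F_i$ of rank $d_i=\dim(T_zZ)^{>0}$, the positive-weight part of the tangent space. Hence $[Z]=\sum_i\mathbb{L}^{d_i}[F_i]$. The same analysis applies to the $\mathbb{C}^*$-invariant smooth subvariety $\pi^{-1}(0)$, whose fixed locus is again $\bigsqcup_iF_i$ and whose attracting cells have rank $d_i'=\dim(T_z\pi^{-1}(0))^{>0}$. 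The crucial comparison is between $d_i$ and $d_i'$: because $d\pi\colon T_zZ\to T_0\mathbb{C}=\mathbb{C}$ is surjective and $\mathbb{C}^*$-equivariant with the target carrying weight $1$, its kernel $T_z\pi^{-1}(0)$ differs from $T_zZ$ by exactly one weight-$1$ line, which is positive; therefore $d_i'=d_i-1$ for every $i$. Consequently $[\pi^{-1}(0)]=\sum_i\mathbb{L}^{d_i-1}[F_i]=\mathbb{L}^{-1}[Z]$, giving the second equality.

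The first equality then follows formally. The scissor relation gives $[Z]=[\pi^{-1}(0)]+[\pi^{-1}(\mathbb{C}^*)]=[\pi^{-1}(0)]+(\mathbb{L}-1)[\pi^{-1}(1)]$, while the step above gives $[Z]=\mathbb{L}[\pi^{-1}(0)]$. Equating and rearranging yields $(\mathbb{L}-1)[\pi^{-1}(0)]=(\mathbb{L}-1)[\pi^{-1}(1)]$. In the dimensional completion $\hat K(\mathcal V_{\mathbb{C}})$ the element $\mathbb{L}-1=\mathbb{L}(1-\mathbb{L}^{-1})$ is invertible, with inverse $\mathbb{L}^{-1}\sum_{k\ge 0}\mathbb{L}^{-k}$, so I may cancel it to conclude $[\pi^{-1}(0)]=[\pi^{-1}(1)]$.

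I expect the main obstacle to be the geometric input underlying the BB decomposition, namely the assertion that each attracting set $Z_i^+$ is genuinely a Zariski-locally trivial affine bundle over its fixed component, so that $[Z_i^+]=\mathbb{L}^{d_i}[F_i]$ holds on the nose; this uses the smoothness of $Z$ and the existence of limits, both furnished by semiprojectivity, but the local triviality must be verified carefully. The remaining delicate point is purely ring-theoretic: in $K(\mathcal V_{\mathbb{C}})$ the class $\mathbb{L}-1$ can be a zero divisor, so the cancellation step genuinely relies on passing to the completion $\hat K(\mathcal V_{\mathbb{C}})$, where $\mathbb{L}-1$ becomes a unit.
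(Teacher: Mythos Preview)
Your argument is correct and complete. The paper itself does not give a proof of this proposition: its entire proof reads ``See \cite[Theorem~5.6]{AO21} for details.'' You have reconstructed what is essentially the argument one finds in the cited reference, namely the Bia\l{}ynicki--Birula decomposition applied to both $Z$ and the smooth fiber $\pi^{-1}(0)$, together with the weight count showing $d_i'=d_i-1$ because $d\pi$ surjects equivariantly onto a weight-$1$ line. Your observation that the cancellation of $\mathbb{L}-1$ genuinely requires working in the completion $\hat K(\mathcal V_{\mathbb{C}})$ is exactly right and worth emphasizing; the paper's statement is careful to place the equalities there. The only point you flag as delicate---Zariski-local triviality of the attracting cells---is standard for smooth varieties with limits existing, and your caution is appropriate but not an obstruction.
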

\begin{proof}
See \cite[Theorem $5.6$]{AO21} for details.
\end{proof}

\begin{theorem}\label{Grothendieck}
In $\hat{K}(\mathcal{V}_{\mathbb{C}})$ the following equalities hold,
\[
[\mathcal{M}_\mathrm{Higgs}(r,d,\alpha)]=[\mathcal{M}_\mathrm{pc}(r,d,\alpha)] \hspace{0.2cm} \mathrm{and} \hspace{0.2cm} [\mathcal{M}_\mathrm{Hod}(r,d,\alpha)] = \mathbb{L}[\mathcal{M}_\mathrm{Higgs}(r,d,\alpha)].
\]
Therefore, we have the following equalities of the $E$-polynomials
\[
E(\mathcal{M}_\mathrm{Higgs}(r,d,\alpha))= E(\mathcal{M}_\mathrm{pc}(r,d,\alpha)) \hspace{0.2cm} \mathrm{and} \hspace{0.2cm} E(\mathcal{M}_\mathrm{Hod}(r,d,\alpha)) = uvE(\mathcal{M}_\mathrm{Higgs}(r,d,\alpha)).
\]
\end{theorem}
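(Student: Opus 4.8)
The plan is to apply Proposition \ref{mostimportant} directly to the parabolic Hodge moduli space. First I would set $Z = \mathcal{M}_\mathrm{Hod}(r,d,\alpha)$ and take $\pi = \mathrm{pr}$ to be the projection of \eqref{submersion}. By Proposition \ref{important} the variety $Z$ is smooth and semiprojective, and $\mathrm{pr}$ is a $\mathbb{C}^*$-equivariant surjective submersion covering the standard scaling action on $\mathbb{C}$; these are precisely the hypotheses of Proposition \ref{mostimportant}. Invoking that proposition, I obtain the two motivic identities $[\mathrm{pr}^{-1}(0)] = [\mathrm{pr}^{-1}(1)]$ and $[\mathcal{M}_\mathrm{Hod}(r,d,\alpha)] = \mathbb{L}\,[\mathrm{pr}^{-1}(0)]$ in $\hat{K}(\mathcal{V}_{\mathbb{C}})$.

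Next I would substitute the fiber identifications recorded in the preliminaries. A parabolic $0$-connection is the same datum as a strongly parabolic Higgs bundle, so $\mathrm{pr}^{-1}(0) = \mathcal{M}_{\mathrm{Higgs}}(r,d,\alpha)$, while $\mathrm{pr}^{-1}(1) = \mathcal{M}_{\mathrm{pc}}(r,d,\alpha)$. Plugging these into the two identities above yields exactly $[\mathcal{M}_\mathrm{Higgs}(r,d,\alpha)] = [\mathcal{M}_\mathrm{pc}(r,d,\alpha)]$ and $[\mathcal{M}_\mathrm{Hod}(r,d,\alpha)] = \mathbb{L}\,[\mathcal{M}_\mathrm{Higgs}(r,d,\alpha)]$, which are the claimed motivic equalities.

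Finally, to derive the $E$-polynomial statements I would apply the $E$-polynomial ring homomorphism $E : \hat{K}(\mathcal{V}_{\mathbb{C}}) \to \mathbb{Z}[u,v]\llbracket (uv)^{-1} \rrbracket$ of Remark \ref{remark} to both motivic identities. Since $E$ is a ring homomorphism, equal motives map to equal $E$-polynomials, so the first identity gives $E(\mathcal{M}_\mathrm{Higgs}(r,d,\alpha)) = E(\mathcal{M}_\mathrm{pc}(r,d,\alpha))$. For the second, using the computation $E(\mathbb{L}) = E(\mathbb{C}) = uv$ from the Examples, the multiplicative class $\mathbb{L}\,[\mathcal{M}_\mathrm{Higgs}(r,d,\alpha)]$ is sent to $uv \cdot E(\mathcal{M}_\mathrm{Higgs}(r,d,\alpha))$, giving $E(\mathcal{M}_\mathrm{Hod}(r,d,\alpha)) = uv\, E(\mathcal{M}_\mathrm{Higgs}(r,d,\alpha))$.

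I do not expect a genuine obstacle in this final assembly: once Proposition \ref{important} is in hand, the theorem is an essentially formal corollary of Proposition \ref{mostimportant} together with the homomorphism property of $E$. The real content sits in the structural results already established, namely the semiprojectivity of $\mathcal{M}_\mathrm{Hod}(r,d,\alpha)$ (via Simpson's semistable reduction in Lemma \ref{limit2} and the properness of the fixed-point locus in Lemma \ref{fixed2}). The only point I would double-check is that the submersion-covering-scaling hypothesis of Proposition \ref{mostimportant} is matched exactly by what Proposition \ref{important} supplies, so that no additional flatness or smoothness-of-morphism argument beyond smoothness of the total space is silently required.
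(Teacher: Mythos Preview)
Your proposal is correct and follows essentially the same approach as the paper: both verify the hypotheses of Proposition~\ref{mostimportant} via Proposition~\ref{important}, read off the motivic equalities from the fibre identifications $\mathrm{pr}^{-1}(0)=\mathcal{M}_\mathrm{Higgs}(r,d,\alpha)$ and $\mathrm{pr}^{-1}(1)=\mathcal{M}_\mathrm{pc}(r,d,\alpha)$, and then apply the $E$-polynomial homomorphism of Remark~\ref{remark} together with $E(\mathbb{L})=uv$. Your cautionary note about matching the submersion hypothesis is also addressed exactly as in the paper, namely by the second part of Proposition~\ref{important}.
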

\begin{proof}
By Proposition \ref{important}, the parabolic Hodge moduli space $\mathcal{M}_{\mathrm{Hod}}(r,d,\alpha)$ is a smooth semiprojective complex variety with the $\mathbb{C}^*$-action given in (\ref{action2}). Also, from the Proposition \ref{important} it follows that the surjective map $\mathrm{pr} : \mathcal{M}_{\mathrm{Hod}}(r,d,\alpha) \to \mathbb{C}$ given in (\ref{submersion}) is a $\mathbb{C}^*$-equivariant submersion covering the natural $\mathbb{C}^*$-action on $\mathbb{C}$. Therfore, by Proposition \ref{mostimportant}, we have
\[
[\mathcal{M}_\mathrm{Higgs}(r,d,\alpha)] = [\mathrm{pr}^{-1}(0)] = [\mathrm{pr}^{-1}(1)] = [\mathcal{M}_\mathrm{pc}(r,d,\alpha)]
\]
and 
\[
[\mathcal{M}_\mathrm{Hod}(r,d,\alpha)] = \mathbb{L}[\mathrm{pr}^{-1}(0)]=  \mathbb{L}[\mathcal{M}_\mathrm{Higgs}(r,d,\alpha)].
\]
Therefore, by Remark (\ref{remark}), $E$-polynomials of the moduli spaces $\mathcal{M}_\mathrm{Higgs}(r,d,\alpha))$ and $\mathcal{M}_\mathrm{pc}(r,d,\alpha))$ are equal, i.e. 
\[
E(\mathcal{M}_\mathrm{Higgs}(r,d,\alpha))= E(\mathcal{M}_\mathrm{pc}(r,d,\alpha))
\]
and by the multiplicative property of the $E$-polynomial, we have
\[
E(\mathcal{M}_\mathrm{Hod}(r,d,\alpha)) = E(\mathbb{C})E(\mathcal{M}_\mathrm{Higgs}(r,d,\alpha)) = uvE(\mathcal{M}_\mathrm{Higgs}(r,d,\alpha)).
\]
\end{proof}

\begin{corollary}
The Hodge structures of the moduli spaces $\mathcal{M}_\mathrm{Higgs}(r,d,\alpha)$ and $\mathcal{M}_\mathrm{pc}(r,d,\alpha)$ are isomorphic, i.e.
\[
H^{\bullet}(\mathcal{M}_\mathrm{Higgs}(r,d,\alpha)) = H^{\bullet}(\mathcal{M}_\mathrm{pc}(r,d,\alpha)).
\]
Also, the moduli spaces $\mathcal{M}_\mathrm{pc}(r,d,\alpha)$ and $\mathcal{M}_\mathrm{Hod}(r,d,\alpha)$ have pure mixed Hodge structures.
\end{corollary}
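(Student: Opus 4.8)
The plan is to derive the corollary from Theorem \ref{Grothendieck} together with the semiprojectivity established in Section 3, following the purity argument of Hausel--Thaddeus \cite{HT03}. Theorem \ref{Grothendieck} already supplies the equality of $E$-polynomials
\[
E(\mathcal{M}_\mathrm{Higgs}(r,d,\alpha)) = E(\mathcal{M}_\mathrm{pc}(r,d,\alpha)),
\]
so the real task is to upgrade this alternating-sum information into a statement about the individual mixed Hodge numbers and to establish purity. I would organise the argument around three inputs: purity of the Higgs and Hodge moduli coming from semiprojectivity, equality of Betti numbers coming from the non-abelian Hodge correspondence, and the weight estimates valid on any smooth variety.

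First I would record that a smooth semiprojective variety has pure cohomology: the Bialynicki--Birula stratification attached to the $\mathbb{C}^*$-action exhibits the space as an iterated union of affine bundles over the (proper) fixed locus, which forces $H^k$ to be pure of weight $k$. Applying this to $\mathcal{M}_\mathrm{Higgs}(r,d,\alpha)$ and to $\mathcal{M}_\mathrm{Hod}(r,d,\alpha)$, both semiprojective by Proposition \ref{important}, immediately yields that $\mathcal{M}_\mathrm{Hod}(r,d,\alpha)$ carries a pure mixed Hodge structure, settling that half of the second assertion. Since $\mathcal{M}_\mathrm{Hod}(r,d,\alpha)$ deformation retracts onto the central fibre $\mathrm{pr}^{-1}(0)=\mathcal{M}_\mathrm{Higgs}(r,d,\alpha)$ under the flow $t\to 0$, this is consistent with the purity of the Higgs moduli.

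Next I would transfer the information to $\mathcal{M}_\mathrm{pc}(r,d,\alpha)=\mathrm{pr}^{-1}(1)$. The non-abelian Hodge correspondence (\cite{S92}, \cite{S94a}, \cite{S95}), or equivalently the $C^\infty$-triviality of the semiprojective family $\mathrm{pr}$ over $\mathbb{C}^*$ together with its degeneration to $\mathrm{pr}^{-1}(0)$, furnishes a diffeomorphism $\mathcal{M}_\mathrm{Higgs}(r,d,\alpha)\cong \mathcal{M}_\mathrm{pc}(r,d,\alpha)$, hence an equality of Betti numbers in every degree. I would then compare three pieces of data on the smooth variety $\mathcal{M}_\mathrm{pc}(r,d,\alpha)$: its mixed Hodge numbers $h^{k,p,q}_c$ enter the $E$-polynomial only through the alternating sums $\mathcal{X}^{p,q}=\sum_k(-1)^k h^{k,p,q}_c$; the weight estimate for smooth varieties forces $h^{k,p,q}_c=0$ unless $p+q\le k$; and the total Betti number equals $\sum_k b_k$. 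Since $\mathcal{M}_\mathrm{Higgs}(r,d,\alpha)$ is pure, the sum of the absolute values of the coefficients of its $E$-polynomial equals its total Betti number; by the equalities of $E$-polynomials and of Betti numbers the same holds for $\mathcal{M}_\mathrm{pc}(r,d,\alpha)$, which forces the alternating sums $\mathcal{X}^{p,q}$ to occur with no cancellation, so that $\sum_k h^{k,p,q}_c=|\mathcal{X}^{p,q}|=h^{p,q}(\mathcal{M}_\mathrm{Higgs})$ for every $(p,q)$. Feeding these per-position totals back into the per-degree Betti identities and exploiting the weight bound $p+q\le k$ in a cumulative (telescoping) comparison then pins all the Hodge mass onto the diagonal $k=p+q$. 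This proves that $\mathcal{M}_\mathrm{pc}(r,d,\alpha)$ is pure with $h^{p,q}(\mathcal{M}_\mathrm{pc})=h^{p,q}(\mathcal{M}_\mathrm{Higgs})$, giving at once the asserted isomorphism $H^{\bullet}(\mathcal{M}_\mathrm{Higgs}(r,d,\alpha))=H^{\bullet}(\mathcal{M}_\mathrm{pc}(r,d,\alpha))$ and the purity of $\mathcal{M}_\mathrm{pc}(r,d,\alpha)$.

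The main obstacle is exactly this last purity step for $\mathcal{M}_\mathrm{pc}(r,d,\alpha)$: unlike $\mathcal{M}_\mathrm{Higgs}$ and $\mathcal{M}_\mathrm{Hod}$, the connection moduli is not preserved by the $\mathbb{C}^*$-action, since that action moves the fibre $\mathrm{pr}^{-1}(1)$, so it is not semiprojective and its purity cannot be read off directly. Moreover the $E$-polynomial, recording only alternating sums, is blind to the cancellation that non-purity would introduce. The resolution must therefore import genuinely topological input, namely the equality of Betti numbers coming from the non-abelian Hodge diffeomorphism, and combine it with the general weight inequalities; it is the interplay of these two that simultaneously rules out cancellation and off-diagonal Hodge pieces. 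I would take care to phrase the final ``isomorphism of Hodge structures'' as the coincidence of all Hodge--Deligne numbers together with purity, which is precisely what this argument delivers.
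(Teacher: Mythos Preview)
Your argument is correct, but it is substantially more elaborate than the paper's. The paper simply invokes \cite[Corollary~1.3.2 and Corollary~1.3.3]{HV15}: since $\mathcal{M}_\mathrm{Hod}(r,d,\alpha)$ is smooth semiprojective with the $\mathbb{C}^*$-equivariant submersion $\mathrm{pr}$ to $\mathbb{C}$, those results immediately give that $\mathcal{M}_\mathrm{Hod}$ has pure cohomology and that the fibres $\mathrm{pr}^{-1}(0)$ and $\mathrm{pr}^{-1}(1)$ have isomorphic cohomologies with pure mixed Hodge structures. No $E$-polynomial bookkeeping, no weight inequalities, and no appeal to the non-abelian Hodge homeomorphism are needed; everything follows from the Bialynicki--Birula geometry of the total space. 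Your route instead combines three independent inputs---the $E$-polynomial equality of Theorem~\ref{Grothendieck}, the Betti-number equality coming from the parabolic non-abelian Hodge homeomorphism, and the weight bound $p+q\le k$ on $H^k_c$ of a smooth variety---and then runs a counting/telescoping argument to force purity of $\mathcal{M}_\mathrm{pc}$. This is a valid and instructive alternative (it shows concretely how purity can be squeezed out of $E$-polynomial plus Betti data), but it imports the non-abelian Hodge correspondence as an external analytic ingredient that the paper's purely semiprojective argument avoids. Note also that your parenthetical ``$C^\infty$-triviality of the family together with its degeneration to $\mathrm{pr}^{-1}(0)$'' is essentially the content of \cite[Corollary~1.3.3]{HV15} itself, so if you rely on that rather than on non-abelian Hodge for the Betti equality, your argument collapses back to the paper's.
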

\begin{proof}
Following \cite[Corollary $1.3.3$]{HV15}, we have that the cohomologies of the fibres $\mathrm{pr}^{-1}(0) = \mathcal{M}_\mathrm{Higgs}(r,d,\alpha)$ and $\mathrm{pr}^{-1}(1) = \mathcal{M}_\mathrm{pc}(r,d,\alpha)$ are isomorphic and have pure mixed Hodge structures. Again by \cite[Corollary $1.3.2$]{HV15}, since the moduli space $\mathcal{M}_\mathrm{Hod}(r,d,\alpha)$ is smooth semiprojective for generic weights, it has pure cohomology.
\end{proof}

\section{Voevodsky motives and Chow motives}
In this section, we will briefly describe Voevodsky's category of geometric motives over a field $k$ with coefficients in a commutative ring $R$. This is a tensor triangulated category. For more details, see \cite{V00}, \cite{MVW06}, \cite{A04} and \cite{L08}.
 
 \subsection{The category of finite correspondences}
 \begin{definition}
 Let $Y$ and $Z$ be varieties over $k$. Let $c(Y,Z)$ denote the group generated by 
\[
\left\{\begin{tikzcd}[column sep=1em]
& W_i \arrow[dl,swap,"\pi_i"] \arrow[rd,"f_i"] \\
Y & & Z
\end{tikzcd}\right\}
\]
where
 \begin{enumerate}
 \item every $W_i \to Y \times Z$ is a closed immersion,
 \item every $W_i$ is integral,
\item every $\pi_i : W_i \longrightarrow Y$ is finite, and
\item every $\pi_i$ dominates an irreducible component of $Y$.
 \end{enumerate}
 Then the elements of the group $c(Y,Z)$ are called the \textit{finite correspondences} between the varieties $Y$ and $Z$.
 \end{definition}
 Let $X,Y$ and $Z$ be varieties over $k$, and let $W_1 \in c(X,Y)$ and $W_2 \in c(Y,Z)$ be two finite correspondences. If $X$ and $Y$ are irreducible then every irreducible component $P$ of $X \times |W_2| \cap |W_1| \times Z$ is finite and $\pi_X(P)=X$. Therefore, we have a bilinear composition rule
 \begin{align*}
     \circ : c(Y,Z) \times c(X,Y) &\longrightarrow c(X,Z)\\
     (W_2,W_1) &\mapsto W_2 \circ W_1 \coloneqq \pi_{(X\times Z)_*}\bigg(\pi^*_{(X\times Y)}(W_1)\cdot \pi^*_{(Y\times Z)}(W_2)\bigg).
 \end{align*}
 Consider the category of smooth $k$-varieties $\textbf{Sm}/k$. Then the objects of \textit{the category of finite correspondences} $\textbf{Corr}_{fin}/k$ are same as $\textbf{Sm}/k$, with
 \[
 \mathrm{Hom}_{\textbf{Corr}_{fin}/k}(Y,Z) \coloneqq c(Y,Z)
 \]
 and the composition law is given as above.
 \begin{remark}
 The operation $\times_k$ on $\textbf{Sm}/k$ and on cycles gives the category $\textbf{Corr}_{fin}/k$ the structure of a tensor category. Therefore, the corresponding bounded homotopy category $K^b(\textbf{Corr}_{fin}/k)$ is a tensor triangulated category.
 \end{remark}
 
 \subsection{The category of effective geometric motives}
 Consider the category $\widehat{DM}^{\mathrm{eff}}_{\mathrm{gm}}(k)$, which is the localization of the tensor triangulated category $K^b(\textbf{Corr}_{fin}/k)$, inverting all objects of the form $[X \times \mathbb{A}^1]\to [X]$ (homotopy) and $[U \cap V] \to [U]\oplus [V] \to [X]$ for any open covering $U \cup V = X$ (Mayer-Vietoris).
 \begin{definition}
 The category $DM^{\mathrm{eff}}_{\mathrm{gm}}(k)$ of \textit{effective geometric motives} over $k$ is the pseudo-abelian envelope of the quotient category $\widehat{DM}^{\mathrm{eff}}_{\mathrm{gm}}(k)$.
 \end{definition}
 
 We now consider the functor 
 \[
 \textbf{Sm}/k \longrightarrow \textbf{Corr}_{fin}/k
 \]
 sending a morphism $f : X \to Y$ in $\textbf{Sm}/k$ to its graph $\Gamma_f \subset X \times_k Y$. We will denote the object in $\textbf{Corr}_{fin}/k$ corresponding to $X \in \textbf{Sm}/k$ by $[X]$. This induces the following covariant functor
 \[
 M^{\mathrm{eff}}_{\mathrm{gm}} : \textbf{Sm}/k \longrightarrow DM^{\mathrm{eff}}_{\mathrm{gm}}(k)
 \]
 where $M^{\mathrm{eff}}_{\mathrm{gm}}(X)$ is the image of $[X]$ in $DM^{\mathrm{eff}}_{\mathrm{gm}}(k)$ and and it sends a morphism $f : X \to Y$ to $M^{\mathrm{eff}}_{\mathrm{gm}}(f) \coloneqq [\Gamma_f]$. 
 
  We note that the category $DM^{\mathrm{eff}}_{\mathrm{gm}}(k)$ is in fact a closed monoidal triangulated category. Therefore, we can consider the cone of a morphism, tensor products. The functor $M^{\mathrm{eff}}_{\mathrm{gm}}$ satisfies the following properties
 \begin{align*}
  M^{\mathrm{eff}}_{\mathrm{gm}}(X \sqcup Y) &= M^{\mathrm{eff}}_{\mathrm{gm}}(X) \oplus M^{\mathrm{eff}}_{\mathrm{gm}}(Y)\\
   M^{\mathrm{eff}}_{\mathrm{gm}}(X \times Y) &= M^{\mathrm{eff}}_{\mathrm{gm}}(X) \otimes M^{\mathrm{eff}}_{\mathrm{gm}}(Y).
 \end{align*}
 
 \begin{definition}
 $M^{\mathrm{eff}}_{\mathrm{gm}}(X)$ is said to be the \textit{effective geometric motive} of a smooth $k$-variety $X$.
 \end{definition}
 
 \subsubsection{Tate motives}
 Let $X \in \textbf{Sm}/k$ be a smooth variety with a $k$-point $0 \in X(k)$. Then the corresponding motive in $K^b(\textbf{Corr}_{fin}/k)$ is defined by
 \[
 \widehat{[X]} \coloneqq \mathrm{Cone}\bigg({i_0}_* : [\mathrm{Spec}(k)] \longrightarrow [X]\bigg).
 \]
 
 We denote the image of $ \widehat{[X]}$ in $M^{\mathrm{eff}}_{\mathrm{gm}}(k)$ by $\widehat{M^{\mathrm{eff}}_{\mathrm{gm}}(X)}$. We set
 \[
 \Lambda(1) \coloneqq \widehat{M^{\mathrm{eff}}_{\mathrm{gm}}(\mathbb{P}^1)}[-2].
 \]
 One can think $\Lambda(1)$ as the reduced homology of $\mathbb{P}^1$. It is an invertible object with respect to the tensor product and its inverse is exactly its dual $\underline{\mathrm{Hom}}(\Lambda(1),\Lambda(0))$. We denote its inverse by $\Lambda(-1)$.
 
 For $r \in \mathbb{Z}$, we set 
 \[
  \Lambda(r) =
  \begin{cases}
                                    \Lambda(1)^{\otimes r} & \text{if $r \geq 0$}\\
                                   \Lambda(-1)^{\otimes -r} & \text{if $r < 0$}.
  \end{cases}
\]
These objects are called \textit{pure Tate motives}. For an object $M \in DM^{\mathrm{eff}}_{\mathrm{gm}}(k)$, the twists
\[
M(r) \coloneqq M \otimes \Lambda(r)
\]
are called the \textit{Tate twists}.

\subsection{The category of geometric motives}
 To define the category of geometric motive we need to consider the motive $\Lambda(1)[2]$ which is similar to the Lefschetz motive $\mathbb{L}$.
 \begin{definition}
 The category $DM_{\mathrm{gm}}(k)$ of \textit{geometric motives} is defined by inverting the functor $\otimes_{\Lambda(1)}$ on $DM^{\mathrm{eff}}_{\mathrm{gm}}(k)$, i.e.  for $n,m \in \mathbb{Z}$ and $A,B \in DM^{\mathrm{eff}}_{\mathrm{gm}}(k)$,
 \[
 \mathrm{Hom}_{DM^{\mathrm{eff}}_{\mathrm{gm}}(k)}(A(n),B(m)) \coloneqq \lim_{{\longrightarrow}_{r}}\mathrm{Hom}_{DM^{\mathrm{eff}}_{\mathrm{gm}}(k)}\big(A\otimes \Lambda(r+n), B \otimes \Lambda(r+m)\big).
 \]
 \end{definition}
 The category of geometric motives $DM_{\mathrm{gm}}(k)$ is also a triangulated category. By Voevodsky's cancellation theorem, the embedding
 \[
 i : DM^{\mathrm{eff}}_{\mathrm{gm}}(k) \longrightarrow DM_{\mathrm{gm}}(k)
 \]
 is a fully faithful functor.
 
 Consider the composition
 \[
 M_{\mathrm{gm}} \coloneqq i \circ M^{\mathrm{eff}}_{\mathrm{gm}} : \textbf{Sm}/k \longrightarrow DM_{\mathrm{gm}}(k).
 \]
 \begin{definition}
 $M_{\mathrm{gm}}(X)$ is called the \textit{geometric motive} of the smooth $k$-variety $X$.
 \end{definition}
 
 Let $R$ be a ring and let $DM^{\mathrm{eff}}_{\mathrm{gm}}(k; R) \coloneqq DM^{\mathrm{eff}}_{\mathrm{gm}}(k) \otimes R$ denote the category of effective geometric motives with coefficients in $R$. We denote by $M^{\mathrm{eff}}_{\mathrm{gm}}(X)_R$ the effective geometric motive of $X$ in the category $DM^{\mathrm{eff}}_{\mathrm{gm}}(k; R)$. Similarly, we denote by $M_{\mathrm{gm}}(X)_R$ the geometric motive of $X$ in the category $DM_{\mathrm{gm}}(k; R) = DM_{\mathrm{gm}}(k) \otimes R$.
 
 \subsection{The category of effective Chow motives}
 Let $\textbf{Chow}^{\mathrm{eff}}(k; R)$ denote the category of effective Chow motives over a field $k$ with coefficients in $R$. There exist a functor
 \[
 \textbf{Chow}^{\mathrm{eff}}(k; R) \longrightarrow DM^{\mathrm{eff}}_{\mathrm{gm}}(k; R)
 \]
which is a fully faithful embedding. This functor is compatible with the tensor structure and the category $\textbf{Chow}^{\mathrm{eff}}(k; R)$ contains the Lefschetz motive $\mathbb{L}$. We can think of the category $DM^{\mathrm{eff}}_{\mathrm{gm}}(k; R)$ as being a "triangulated envelop" of the category $\textbf{Chow}^{\mathrm{eff}}(k; R)$. We can consider the motive of a smooth $k$-variety either in $\textbf{Chow}^{\mathrm{eff}}(k; R)$ or in the category $DM^{\mathrm{eff}}_{\mathrm{gm}}(k; R)$. See \cite{M68},\cite{S94} and \cite{B01} for more details.

Let $C(X)_R \in \textbf{Chow}^{\mathrm{eff}}(k; R)$ denote the \textit{Chow motive} of $X$ with coefficients in $R$.
 
\begin{theorem}\label{Voevodsky}
Let $X$ be a compact Riemann surface of genus $g \geq 2$. Then for any ring $R$, we have the following isomorphism of Voevodsky's motive,
\[
M_{\mathrm{gm}}\big(\mathcal{M}_\mathrm{Higgs}(r,d,\alpha)\big)_R \cong M_{\mathrm{gm}}\big(\mathcal{M}_\mathrm{pc}(r,d,\alpha)\big)_R \in DM_{\mathrm{gm}}(\mathbb{C}; R).
\]
\end{theorem}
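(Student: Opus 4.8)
\section*{Proof proposal}

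The plan is to run the exact motivic analogue of the argument for Theorem \ref{Grothendieck}: where the Grothendieck statement invoked the abstract principle of \cite{AO21} for semiprojective families, here I would invoke its refinement to Voevodsky motives due to Hoskins and Pepin Lehalleur \cite{HL21}, which upgrades the equality of classes in $\hat{K}(\mathcal{V}_{\mathbb{C}})$ to an isomorphism in $DM_{\mathrm{gm}}(\mathbb{C};R)$. By Proposition \ref{important} the parabolic Hodge moduli space $\mathcal{M}_\mathrm{Hod}(r,d,\alpha)$ is a smooth semiprojective variety, and the map $\mathrm{pr}$ of \eqref{submersion} is a $\mathbb{C}^*$-equivariant surjective submersion covering the scaling action on $\mathbb{C}$, with fibres $\mathrm{pr}^{-1}(0)=\mathcal{M}_\mathrm{Higgs}(r,d,\alpha)$ and $\mathrm{pr}^{-1}(1)=\mathcal{M}_\mathrm{pc}(r,d,\alpha)$. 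Thus the essential step is to check that the hypotheses of the abstract theorem of \cite{HL21} are met, which is precisely the content of Proposition \ref{important}, and then read off the isomorphism $M_{\mathrm{gm}}(\mathcal{M}_\mathrm{Higgs}(r,d,\alpha))_R \cong M_{\mathrm{gm}}(\mathcal{M}_\mathrm{pc}(r,d,\alpha))_R$ from it.

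For completeness I would recall why semiprojectivity is the key input. Writing $F \coloneqq \mathcal{M}_\mathrm{Hod}(r,d,\alpha)^{\mathbb{C}^*}$ for the fixed locus, Lemma \ref{fixed2} gives that $F$ is proper and $F \subset \mathrm{pr}^{-1}(0)=\mathcal{M}_\mathrm{Higgs}(r,d,\alpha)$. Because every limit $\lim_{t\to 0}(t\cdot z)$ exists (Lemma \ref{limit2}), the Bia\l{}ynicki--Birula construction stratifies the smooth variety $\mathcal{M}_\mathrm{Hod}(r,d,\alpha)$ by attracting sets, each a Zariski-locally-trivial affine bundle over a connected component $F_\alpha$ of $F$. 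This decomposition restricts to the central fibre $\mathcal{M}_\mathrm{Higgs}(r,d,\alpha)$, and via the flow $z\mapsto \lim_{t\to 0}(t\cdot z)$ it organises the general fibre $\mathcal{M}_\mathrm{pc}(r,d,\alpha)$ over the same components $F_\alpha$. Feeding these stratifications into the motivic Gysin and localisation triangles and using $\mathbb{A}^1$-homotopy invariance in $DM_{\mathrm{gm}}(\mathbb{C};R)$, one shows that both $M_{\mathrm{gm}}(\mathcal{M}_\mathrm{Higgs}(r,d,\alpha))_R$ and $M_{\mathrm{gm}}(\mathcal{M}_\mathrm{pc}(r,d,\alpha))_R$ are built from the same Tate-twisted motives $M_{\mathrm{gm}}(F_\alpha)_R(c_\alpha)[2c_\alpha]$ of the common fixed components, which yields the desired isomorphism. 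Since only Gysin triangles and homotopy invariance enter, the argument is valid for an arbitrary coefficient ring $R$, as required.

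The step I expect to be the main obstacle is not the formal invocation but confirming that the abstract theorem of \cite{HL21} applies verbatim in the parabolic setting: one must know that the attracting-set stratification of $\mathcal{M}_\mathrm{Hod}(r,d,\alpha)$ consists of affine bundles and that the general-fibre flow matches these over the same fixed components $F_\alpha$. All of this rests on the smoothness and semiprojectivity of $\mathcal{M}_\mathrm{Hod}(r,d,\alpha)$ proved in Proposition \ref{important}, together with the submersion property of $\mathrm{pr}$; once these are in place the parabolic case is formally identical to the non-parabolic one treated in \cite{HL21}. I would therefore present the proof as a direct application of \cite{HL21} to the family $\mathrm{pr}$ of \eqref{submersion}, with the verification of hypotheses supplied by Proposition \ref{important}.
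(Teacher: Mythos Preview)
Your proposal is correct and matches the paper's own proof: both verify the hypotheses of the abstract result \cite[Theorem B.1]{HL21} using Proposition \ref{important} (smooth semiprojectivity of $\mathcal{M}_\mathrm{Hod}(r,d,\alpha)$ and the $\mathbb{C}^*$-equivariant submersion $\mathrm{pr}$ with fibres $\mathcal{M}_\mathrm{Higgs}(r,d,\alpha)$ and $\mathcal{M}_\mathrm{pc}(r,d,\alpha)$), and then read off the isomorphism directly. Your additional paragraph on the Bia\l{}ynicki--Birula stratification is a helpful unpacking of why the cited theorem holds, but it is not a different argument.
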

\begin{proof}
We know that the moduli space $\mathcal{M}_\mathrm{Hod}(r,d,\alpha)$ is a smooth semiprojective variety equipped with a $\mathbb{C}^*$-invariant surjective submersion $\mathrm{pr} : \mathcal{M}_\mathrm{Hod}(r,d,\alpha) \to \mathbb{C}$ such that $\mathrm{pr}^{-1}(0) = \mathcal{M}_{\mathrm{Higgs}}(r,d,\alpha)$ and $\mathrm{pr}^{-1}(1) = \mathcal{M}_{\mathrm{pc}}(r,d,\alpha)$ (see \ref{submersion}). Therefore by \cite[Theorem B.1]{HL21}, we have the following isomorphism in the Voevodsky's category $DM_{\mathrm{gm}}(\mathbb{C}; R)$
\[
M_{\mathrm{gm}}\big(\mathcal{M}_\mathrm{Higgs}(r,d,\alpha)\big)_R = M_{\mathrm{gm}}\big(\mathrm{pr}^{-1}(0)\big)_R \cong M_{\mathrm{gm}}\big(\mathrm{pr}^{-1}(1)\big)_R = M_{\mathrm{gm}}\big(\mathcal{M}_\mathrm{pc}(r,d,\alpha)\big)_R.
\]
\end{proof}
This implies the following isomorphism of Chow motives.
\begin{theorem}\label{Chow}
For any ring $R$ we have the following isomorphism of Chow motives,
\[
C\big(\mathcal{M}_\mathrm{Higgs}(r,d,\alpha)\big)_R \cong C\big(\mathcal{M}_\mathrm{pc}(r,d,\alpha)\big)_R \in \textbf{Chow}^{\mathrm{eff}}(\mathbb{C}; R).
\]
\end{theorem}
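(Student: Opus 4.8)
The plan is to deduce the isomorphism of Chow motives directly from the isomorphism of Voevodsky motives established in Theorem \ref{Voevodsky}, using the fully faithful embedding of the category of effective Chow motives into Voevodsky's category of effective geometric motives. First I would observe that both moduli spaces $\mathcal{M}_\mathrm{Higgs}(r,d,\alpha)$ and $\mathcal{M}_\mathrm{pc}(r,d,\alpha)$ are smooth quasi-projective complex varieties, so their Chow motives $C\big(\mathcal{M}_\mathrm{Higgs}(r,d,\alpha)\big)_R$ and $C\big(\mathcal{M}_\mathrm{pc}(r,d,\alpha)\big)_R$ are well-defined objects of $\textbf{Chow}^{\mathrm{eff}}(\mathbb{C}; R)$, and they live in the image of the embedding
\[
\iota : \textbf{Chow}^{\mathrm{eff}}(\mathbb{C}; R) \longrightarrow DM^{\mathrm{eff}}_{\mathrm{gm}}(\mathbb{C}; R) \subset DM_{\mathrm{gm}}(\mathbb{C}; R)
\]
recalled earlier in this section.

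Next I would record the compatibility between the two motivic realizations: under this embedding, the Chow motive of a smooth variety maps (up to the standard shift and Tate twist conventions) to its Voevodsky geometric motive. Concretely, there is a natural identification $\iota\big(C(Y)_R\big) \cong M_{\mathrm{gm}}(Y)_R$ for any smooth $\mathbb{C}$-variety $Y$, so that the geometric motives $M_{\mathrm{gm}}\big(\mathcal{M}_\mathrm{Higgs}(r,d,\alpha)\big)_R$ and $M_{\mathrm{gm}}\big(\mathcal{M}_\mathrm{pc}(r,d,\alpha)\big)_R$ are precisely the images under $\iota$ of the corresponding Chow motives. Theorem \ref{Voevodsky} supplies an isomorphism between these two geometric motives in $DM_{\mathrm{gm}}(\mathbb{C}; R)$.

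Finally I would invoke full faithfulness of $\iota$: since the embedding is fully faithful, it induces a bijection on $\mathrm{Hom}$-groups, and in particular an isomorphism in the target category $DM^{\mathrm{eff}}_{\mathrm{gm}}(\mathbb{C}; R)$ between objects in the image of $\iota$ reflects back to an isomorphism in $\textbf{Chow}^{\mathrm{eff}}(\mathbb{C}; R)$. Applying this to the isomorphism of Theorem \ref{Voevodsky} yields the desired isomorphism
\[
C\big(\mathcal{M}_\mathrm{Higgs}(r,d,\alpha)\big)_R \cong C\big(\mathcal{M}_\mathrm{pc}(r,d,\alpha)\big)_R \in \textbf{Chow}^{\mathrm{eff}}(\mathbb{C}; R).
\]

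The only genuinely delicate point is ensuring that the isomorphism produced by Theorem \ref{Voevodsky} actually lands between objects of the image of $\iota$ rather than between more general geometric motives, and that the shift and twist bookkeeping matches the conventions under which $\iota$ is defined; once both moduli spaces are known to be smooth (which was established for generic weights in the Preliminaries) this is automatic, since smooth varieties have Chow motives whose images are exactly their geometric motives. Thus the main content is entirely carried by Theorem \ref{Voevodsky} together with the formal property that a fully faithful functor reflects isomorphisms, and no further geometric input is required.
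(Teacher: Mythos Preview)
There is a genuine gap in your argument. The category $\textbf{Chow}^{\mathrm{eff}}(\mathbb{C};R)$ of effective Chow motives is built from smooth \emph{projective} varieties, and the fully faithful embedding $\iota:\textbf{Chow}^{\mathrm{eff}}(\mathbb{C};R)\hookrightarrow DM^{\mathrm{eff}}_{\mathrm{gm}}(\mathbb{C};R)$ identifies $C(Y)_R$ with $M_{\mathrm{gm}}(Y)_R$ only when $Y$ is smooth and \emph{projective}. The moduli spaces $\mathcal{M}_{\mathrm{Higgs}}(r,d,\alpha)$ and $\mathcal{M}_{\mathrm{pc}}(r,d,\alpha)$ are smooth quasi-projective but \emph{not} projective, so your assertion that ``$\iota\big(C(Y)_R\big)\cong M_{\mathrm{gm}}(Y)_R$ for any smooth $\mathbb{C}$-variety $Y$'' is not available, and there is no a~priori reason for their geometric motives to lie in the essential image of $\iota$. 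Full faithfulness of $\iota$ is therefore of no use until one knows that both $M_{\mathrm{gm}}\big(\mathcal{M}_{\mathrm{Higgs}}\big)_R$ and $M_{\mathrm{gm}}\big(\mathcal{M}_{\mathrm{pc}}\big)_R$ are in fact isomorphic to Chow motives.

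This is precisely the step the paper does not skip: it invokes \cite[Corollary~A.5]{HL21} to show that the Voevodsky motive of a smooth \emph{semiprojective} variety is pure, i.e.\ lies in the image of $\iota$. Semiprojectivity of $\mathcal{M}_{\mathrm{Higgs}}(r,d,\alpha)$ was established earlier, and the purity of $M_{\mathrm{gm}}\big(\mathcal{M}_{\mathrm{pc}}(r,d,\alpha)\big)_R$ follows similarly (via the Hodge moduli space and the results of \cite{HL21}). Only after this purity input does the isomorphism of Voevodsky motives from Theorem~\ref{Voevodsky} descend, via full faithfulness, to an isomorphism of Chow motives. Your ``only genuinely delicate point'' paragraph touches on the right issue but resolves it incorrectly: smoothness alone does not guarantee that the geometric motive is a Chow motive; the extra geometric input (semiprojectivity and the purity result of \cite{HL21}) is essential.
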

\begin{proof}
Since the moduli space $\mathcal{M}_\mathrm{Higgs}(r,d,\alpha)$ is smooth semiprojective, its Voevodsky motive is pure by \cite[Corollary A.5]{HL21}. Similarly, the motive of the moduli space $\mathcal{M}_\mathrm{pc}(r,d,\alpha)$ is pure. Since their Voevodsky motives are isomorphic by the above Theorem \ref{Voevodsky}, their Chow motives are also isomorphic.
\end{proof}

\subsection{Rank two and odd degree case:} In \cite{FHL23}, the authors gave a closed formula for Chow motives of the moduli space of semistable parabolic Higgs bundles of rank $2$, odd degree $d$, and generic weight $\alpha$ with full flags at $n$ points $p_1, \dots, p_n \in X$. In \cite[Theorem $6.8$]{FHL23}, they showed that the (integral) Chow motives of $\mathcal{M}^{\alpha}_{\mathrm{Higgs}} = \mathcal{M}_\mathrm{Higgs}(2,d,\alpha)$ is given by the following isomorphism
\begin{align*}
    C(\mathcal{M}^\alpha_{\mathrm{Higgs}})_{\mathbb{Z}} \cong &C(\mathcal{M})_{\mathbb{Z}} \otimes C(\mathbb{P}^1)^{\otimes n}_{\mathbb{Z}}\\ &\oplus \bigoplus_{{0 \leq l \leq n} \atop {\tfrac{l+1-n}{2} \leq j \leq g-1}} C(\mathrm{Pic}^{a_{d,j}}(X))_{\mathbb{Z}} \otimes C(\mathrm{Sym}^{2j+n-l-1}(X))_{\mathbb{Z}}(3g-2j+l-2)^{\oplus \binom{n}{l}},
\end{align*}
where $a_{d,j} \coloneqq g-j + (d-1)/2$ and $\mathcal{M} = \mathcal{M}(2,d)$ is the moduli of semistable vector bundles over $X$. So, the (integral) Chow motives of $\mathcal{M}^{\alpha}_{\mathrm{Higgs}}$ are independent of weights and can be expressed in terms of the (integral) Chow motives of the non-parabolic case (see \cite{S02, BD07} for the description of motives of the moduli space of vector bundles over a curve).  Since the cohomology of $\mathcal{M}^{\alpha}_{\mathrm{Higgs}}$ is pure for generic weights, the Voevodsky motive is a Chow motive. Therefore, we get a description of the Voevodsky motive of $\mathcal{M}^{\alpha}_{\mathrm{Higgs}}$. Thus, from the above theorems \ref{Voevodsky} and \ref{Chow}, we get an explicit description of the (integral) Voevodsky (or, Chow) motives of the moduli space parabolic connections.

\section{Motives of moduli spaces with fixed determinant}
In earlier sections, we were working on the moduli space of parabolic bundles of fixed rank $r$ and degree $d$ over a curve $X$, which is the same as the moduli space of parabolic $\mathrm{GL}(r,\mathbb{C})$-bundles of degree $d$ over $X$. In this final section, we will consider the moduli space of parabolic $\mathrm{SL}(r,\mathbb{C})$-bundles over $X$, i.e. the moduli space of parabolic bundles over $X$ with fixed determinant.

By a \textit{parabolic $\mathrm{SL}(r,\mathbb{C})$-Higgs bundle} $(E_*,\Phi)$, we mean a parabolic bundle $E_*$ of rank $r$ with determinant $\xi$ and traceless Higgs field $\Phi$. Let $\mathrm{Jac}^d(X)$ denote the space of degree $d$ line bundles over $X$. Consider the determinant map
\begin{align*}
    \mathrm{det} : \mathcal{M}_{\mathrm{Higgs}}(r,d,\alpha) &\longrightarrow \mathrm{Jac}^d(X) \times H^0(X,K)\\
    (E_*,\Phi) &\longmapsto (\wedge^rE,\mathrm{trace}(\Phi)).
\end{align*}
Since $\Phi$ is strongly parabolic, $\mathrm{trace}(\Phi) \in H^0(X,K)\subset H^0(X,K(D))$. The moduli space $\mathcal{M}^{\xi}_{\mathrm{Higgs}}(r,d,\alpha)$ of semistable parabolic Higgs bundles with fixed determinant $\xi$ is defined by the fiber $\mathrm{det}^{-1}(\xi,0)$, i.e.
\[
\mathcal{M}^{\xi}_{\mathrm{Higgs}}(r,d,\alpha) \coloneqq \mathrm{det}^{-1}(\xi,0).
\]
As before, if weights are generic then the moduli space $\mathcal{M}^{\xi}_{\mathrm{Higgs}}(r,d,\alpha)$ is a smooth quasi-projective complex variety of dimension
\[
\dim\mathcal{M}^{\xi}_{\mathrm{Higgs}}(r,d,\alpha) = 2(g-1)(r^2-1)+n(r^2-r).
\]
In this case, as $\mathrm{trace}(\Phi)=0$, the Hitchin map is given by 
\[
	h^{\xi} : \mathcal{M}^{\xi}_{\mathrm{Higgs}}(r,d,\alpha) \longrightarrow \mathcal{H}^{\xi} \coloneqq \bigoplus_{i=2}^{r} H^0(X, K^i(D^{i-1})).
\]
Following \cite[theorem $1.2$]{BGL11} and (\ref{HiggsSemiproj}), similarly we can prove that the Hitchin map $h^{\xi}$ is $\mathbb{C}^*$-equivariant and the moduli space $\mathcal{M}^{\xi}_{\mathrm{Higgs}}(r,d,\alpha)$ is smooth $\mathbb{C}^*$-equivariant closed subvariety of $\mathcal{M}_{\mathrm{Higgs}}(r,d,\alpha)$. Therefore, $\mathcal{M}^{\xi}_{\mathrm{Higgs}}(r,d,\alpha)$ is smooth semiprojective.

By a \textit{parabolic $\lambda$-connection with fixed determinant $(\xi,\delta)$} (i.e. for the group $\mathrm{SL}(r,\mathbb{C})$), we mean a parabolic $\lambda$-connection  $(E_*,\lambda,\nabla)$ such that $\wedge^r E_* \cong \xi$ and $\mathrm{trace}(\nabla) =\delta$ (see \cite[Definition $8.1$]{A17} for more details). It can be verified that the $\mathrm{trace}(\nabla)$ gives a $\lambda$-connection on the line bundle $\xi$. Consider the determinant map
\begin{align*}
    \mathrm{det} : \mathcal{M}_{\mathrm{Hod}}(r,d,\alpha) &\longrightarrow \mathcal{M}_{\mathrm{Hod}}(1,d,\alpha)\\
    (E_*,\lambda,\nabla) &\longmapsto (\wedge^rE_*,\lambda,\mathrm{trace}(\nabla)).
\end{align*}

Then the moduli space $\mathcal{M}^{\xi}_{\mathrm{Hod}}(r,d,\alpha)$ of semistable parabolic $\lambda$-connections with fixed determinant $(\xi,\delta)$ is defined by
\[
\mathcal{M}^{\xi}_{\mathrm{Hod}}(r,d,\alpha) \coloneqq \mathrm{det}^{-1}(\xi,\lambda, \delta).
\]
The moduli space $\mathcal{M}^{\xi}_{\mathrm{Hod}}(r,d,\alpha)$ is clearly a smooth $\mathbb{C}^*$-invariant closed subvariety of $\mathcal{M}_{\mathrm{Hod}}(r,d,\alpha)$. Therefore, following (\ref{HodgeSemiProj}), we can similarly prove that $\mathcal{M}^{\xi}_{\mathrm{Hod}}(r,d,\alpha)$ is in fact semiprojective. By considering the restriction of the morphism (\ref{submersion}), we get a $\mathbb{C}^*$-invariant surjective submersion
\[
\mathrm{pr} : \mathcal{M}^{\xi}_{\mathrm{Hod}}(r,d,\alpha) \longrightarrow \mathbb{C}.
\]
Let $\mathcal{M}^{\xi}_{\mathrm{pc}}(r,d,\alpha)$ denote the moduli space of semistable parabolic connections with fixed determinant $(\xi,\delta)$.

Then we have the following isomorphisms 
\begin{enumerate}
    \item $\mathrm{pr}^{-1}(0) \cong \mathcal{M}^{\xi}_{\mathrm{Higgs}}(r,d,\alpha)$
    \item $\mathrm{pr}^{-1}(1) \cong \mathcal{M}^{\xi}_{\mathrm{pc}}(r,d,\alpha)$
    \item $\mathrm{pr}^{-1}(\mathbb{C}^*) \cong \mathcal{M}^{\xi}_{\mathrm{pc}}(r,d,\alpha) \times \mathbb{C}^*$.
\end{enumerate}

Then we have the following motivic invariance theorems.
\begin{theorem}[Grothendieck motive]\label{fix1}
In the Grothendieck ring of varieties $\hat{K}(\mathcal{V}_{\mathbb{C}})$ the following equalities hold,
\[
[\mathcal{M}^{\xi}_\mathrm{Higgs}(r,d,\alpha)]=[\mathcal{M}^{\xi}_\mathrm{pc}(r,d,\alpha)] \hspace{0.2cm} \mathrm{and} \hspace{0.2cm} [\mathcal{M}^{\xi}_\mathrm{Hod}(r,d,\alpha)] = \mathbb{L}[\mathcal{M}^{\xi}_\mathrm{Higgs}(r,d,\alpha)].
\]
Therefore, we have the following equalities of the $E$-polynomials
\[
E(\mathcal{M}^{\xi}_\mathrm{Higgs}(r,d,\alpha))= E(\mathcal{M}^{\xi}_\mathrm{pc}(r,d,\alpha)) \hspace{0.2cm} \mathrm{and} \hspace{0.2cm} E(\mathcal{M}^{\xi}_\mathrm{Hod}(r,d,\alpha)) = uvE(\mathcal{M}^{\xi}_\mathrm{Higgs}(r,d,\alpha)).
\]
\end{theorem}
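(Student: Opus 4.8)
The plan is to run the fixed-determinant argument in exact parallel with the proof of Theorem \ref{Grothendieck}, since all the required geometric input has already been assembled earlier in this section. Specifically, following the $\mathrm{GL}$-arguments of subsections \ref{HiggsSemiproj} and \ref{HodgeSemiProj} (together with \cite{BGL11}), the moduli space $\mathcal{M}^{\xi}_{\mathrm{Hod}}(r,d,\alpha)$ is a smooth semiprojective complex variety carrying the restricted $\mathbb{C}^*$-action, and the restriction of the projection (\ref{submersion}) furnishes a $\mathbb{C}^*$-equivariant surjective submersion $\mathrm{pr} : \mathcal{M}^{\xi}_{\mathrm{Hod}}(r,d,\alpha) \to \mathbb{C}$ covering the scaling action on $\mathbb{C}$, whose fibers are identified as $\mathrm{pr}^{-1}(0) \cong \mathcal{M}^{\xi}_{\mathrm{Higgs}}(r,d,\alpha)$ and $\mathrm{pr}^{-1}(1) \cong \mathcal{M}^{\xi}_{\mathrm{pc}}(r,d,\alpha)$. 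With these facts in hand, Proposition \ref{mostimportant} applies verbatim with $Z = \mathcal{M}^{\xi}_{\mathrm{Hod}}(r,d,\alpha)$ and $\pi = \mathrm{pr}$.

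Applying that proposition yields at once the two motivic equalities $[\mathrm{pr}^{-1}(0)] = [\mathrm{pr}^{-1}(1)]$ and $[\mathcal{M}^{\xi}_{\mathrm{Hod}}(r,d,\alpha)] = \mathbb{L}[\mathrm{pr}^{-1}(0)]$ in $\hat{K}(\mathcal{V}_{\mathbb{C}})$. Substituting the fiber identifications listed above then gives $[\mathcal{M}^{\xi}_{\mathrm{Higgs}}(r,d,\alpha)] = [\mathcal{M}^{\xi}_{\mathrm{pc}}(r,d,\alpha)]$ and $[\mathcal{M}^{\xi}_{\mathrm{Hod}}(r,d,\alpha)] = \mathbb{L}[\mathcal{M}^{\xi}_{\mathrm{Higgs}}(r,d,\alpha)]$, which are precisely the two asserted Grothendieck-motivic equalities.

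For the $E$-polynomial statements I would invoke Remark \ref{remark}: the $E$-polynomial factors through a ring homomorphism on $\hat{K}(\mathcal{V}_{\mathbb{C}})$, so equal motives have equal $E$-polynomials, giving $E(\mathcal{M}^{\xi}_{\mathrm{Higgs}}(r,d,\alpha)) = E(\mathcal{M}^{\xi}_{\mathrm{pc}}(r,d,\alpha))$. For the Hodge relation, applying $E$ to $[\mathcal{M}^{\xi}_{\mathrm{Hod}}(r,d,\alpha)] = \mathbb{L}[\mathcal{M}^{\xi}_{\mathrm{Higgs}}(r,d,\alpha)]$ and using $E(\mathbb{L}) = E(\mathbb{C}) = uv$ yields $E(\mathcal{M}^{\xi}_{\mathrm{Hod}}(r,d,\alpha)) = uv\, E(\mathcal{M}^{\xi}_{\mathrm{Higgs}}(r,d,\alpha))$, as claimed.

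I do not anticipate any genuinely new difficulty at this stage: the substantive work, namely establishing semiprojectivity of the fixed-determinant Higgs and Hodge moduli and verifying the submersion/fiber structure, was already carried out in the preceding discussion of this section. The only point requiring mild care is to confirm that the restricted projection remains a \emph{surjective submersion} and is genuinely $\mathbb{C}^*$-equivariant for the scaling action on the target (rather than merely invariant), so that the hypotheses of Proposition \ref{mostimportant} are met literally; once this compatibility is checked, the theorem follows formally from the $\mathrm{GL}$-case by the same chain of implications.
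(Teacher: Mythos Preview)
Your proposal is correct and follows precisely the approach indicated in the paper: the paper's own proof simply states that the argument is analogous to that of Theorem \ref{Grothendieck}, with the objects replaced by their fixed-determinant versions, which is exactly what you have spelled out. Your added caveat about checking that the restricted projection is a $\mathbb{C}^*$-equivariant surjective submersion is appropriate and matches the preparatory discussion immediately preceding the theorem.
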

\begin{proof}
The proof is analogous to the proof of the Theorem \ref{Grothendieck}. The objects simply need to be carefully changed to the fixed determinant version.
\end{proof}

\begin{theorem}[Voevodsky motive]\label{fix2}
For any ring $R$, we have the following isomorphism of Voevodsky's motive,
\[
M_{\mathrm{gm}}\big(\mathcal{M}^{\xi}_\mathrm{Higgs}(r,d,\alpha)\big)_R \cong M_{\mathrm{gm}}\big(\mathcal{M}^{\xi}_\mathrm{pc}(r,d,\alpha)\big)_R \in DM^{\mathrm{eff}}_{\mathrm{gm}}(\mathbb{C}; R).
\]
\end{theorem}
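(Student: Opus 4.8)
The plan is to mirror exactly the argument already carried out in the non-fixed-determinant case (Theorem \ref{Voevodsky}), transporting it verbatim to the $\mathrm{SL}(r,\mathbb{C})$ setting. The essential input is the structural result established just above the statement, namely that the fixed-determinant Hodge moduli space $\mathcal{M}^{\xi}_{\mathrm{Hod}}(r,d,\alpha)$ is a smooth semiprojective complex variety carrying a $\mathbb{C}^*$-equivariant surjective submersion $\mathrm{pr} : \mathcal{M}^{\xi}_{\mathrm{Hod}}(r,d,\alpha) \to \mathbb{C}$ covering the standard scaling action on $\mathbb{C}$, together with the three fibre identifications $\mathrm{pr}^{-1}(0) \cong \mathcal{M}^{\xi}_{\mathrm{Higgs}}(r,d,\alpha)$ and $\mathrm{pr}^{-1}(1) \cong \mathcal{M}^{\xi}_{\mathrm{pc}}(r,d,\alpha)$. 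All of this was proved in the paragraphs preceding the statement, so I may invoke it directly.

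First I would record that $\mathcal{M}^{\xi}_{\mathrm{Hod}}(r,d,\alpha)$ satisfies the hypotheses of the motivic non-abelian Hodge theorem of Hoskins--Lehalleur. Concretely, smooth semiprojectivity and the existence of the equivariant submersion $\mathrm{pr}$ with scaling-compatible base are precisely the conditions required by \cite[Theorem B.1]{HL21}. The only thing to check is that these hypotheses survive the passage to fixed determinant, which they do because $\mathcal{M}^{\xi}_{\mathrm{Hod}}(r,d,\alpha)$ sits inside $\mathcal{M}_{\mathrm{Hod}}(r,d,\alpha)$ as a smooth $\mathbb{C}^*$-invariant closed subvariety and $\mathrm{pr}$ is just the restriction of the original submersion.

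Next I would apply \cite[Theorem B.1]{HL21} to conclude that the geometric motives of the zero and one fibres agree. This yields the chain of isomorphisms in $DM_{\mathrm{gm}}(\mathbb{C}; R)$,
\[
M_{\mathrm{gm}}\big(\mathcal{M}^{\xi}_{\mathrm{Higgs}}(r,d,\alpha)\big)_R = M_{\mathrm{gm}}\big(\mathrm{pr}^{-1}(0)\big)_R \cong M_{\mathrm{gm}}\big(\mathrm{pr}^{-1}(1)\big)_R = M_{\mathrm{gm}}\big(\mathcal{M}^{\xi}_{\mathrm{pc}}(r,d,\alpha)\big)_R,
\]
where the outer equalities are the fibre identifications above. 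This already gives the desired isomorphism; the claimed membership in $DM^{\mathrm{eff}}_{\mathrm{gm}}(\mathbb{C}; R)$ follows because geometric motives of smooth varieties are effective.

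I do not anticipate a genuine obstacle here: the fixed-determinant case is formally identical to the full case, and all the hard geometric work — establishing semiprojectivity, the existence of the submersion, and the limit existence via Simpson's properness-type result — has been completed before the statement. The only point demanding care is verifying that the fixed-determinant moduli spaces really do inherit smooth semiprojectivity and the compatible $\mathbb{C}^*$-action, but this is exactly what the discussion preceding the theorem supplies by restricting the $\mathrm{GL}(r,\mathbb{C})$ constructions to the $\mathrm{SL}(r,\mathbb{C})$ locus. Thus the proof is essentially a citation of \cite[Theorem B.1]{HL21} applied to the fixed-determinant Hodge moduli space, and the main subtlety is bookkeeping rather than mathematics.
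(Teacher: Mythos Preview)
Your proposal is correct and follows essentially the same approach as the paper: the paper's proof is simply ``The proof is the same as in Theorem \ref{Voevodsky},'' which in turn applies \cite[Theorem B.1]{HL21} to the smooth semiprojective Hodge moduli space with its $\mathbb{C}^*$-equivariant submersion $\mathrm{pr}$ to $\mathbb{C}$. You have reproduced exactly this argument in the fixed-determinant setting, invoking the same cited result and the same structural inputs established in the preceding discussion.
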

\begin{proof}
The proof is the same as in Theorem \ref{Voevodsky}.
\end{proof}

\begin{theorem}[Chow motive]\label{fix3}
For any ring $R$ we have the following isomorphism of Chow motives,
\[
C\big(\mathcal{M}^{\xi}_\mathrm{Higgs}(r,d,\alpha)\big)_R \cong C\big(\mathcal{M}^{\xi}_\mathrm{pc}(r,d,\alpha)\big)_R \in \textbf{Chow}^{\mathrm{eff}}(\mathbb{C}; R).
\]
\end{theorem}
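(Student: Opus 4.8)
The plan is to deduce the isomorphism of Chow motives directly from the already-established isomorphism of Voevodsky motives in Theorem \ref{Voevodsky}, exploiting the fully faithful embedding $\textbf{Chow}^{\mathrm{eff}}(\mathbb{C}; R) \longrightarrow DM^{\mathrm{eff}}_{\mathrm{gm}}(\mathbb{C}; R)$ together with the purity of the motives involved. The key principle is that on the subcategory of pure (Chow) motives, the embedding into $DM_{\mathrm{gm}}$ reflects isomorphisms: if two objects lie in the image of $\textbf{Chow}^{\mathrm{eff}}$ and become isomorphic in $DM_{\mathrm{gm}}$, then they are already isomorphic in $\textbf{Chow}^{\mathrm{eff}}$, since a fully faithful functor induces bijections on Hom-sets and hence detects isomorphisms.

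First I would invoke the semiprojectivity of $\mathcal{M}_\mathrm{Higgs}(r,d,\alpha)$ (Proposition, Section \ref{HiggsSemiproj}) to conclude, via \cite[Corollary A.5]{HL21}, that its Voevodsky motive $M_{\mathrm{gm}}\big(\mathcal{M}_\mathrm{Higgs}(r,d,\alpha)\big)_R$ is pure, i.e. lies in the image of the embedding from $\textbf{Chow}^{\mathrm{eff}}(\mathbb{C}; R)$. Concretely, this means there is a Chow motive, namely $C\big(\mathcal{M}_\mathrm{Higgs}(r,d,\alpha)\big)_R$, whose image under the embedding is isomorphic to the Voevodsky motive. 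The same argument applies verbatim to $\mathcal{M}_\mathrm{pc}(r,d,\alpha)$, which is likewise smooth and whose Voevodsky motive is pure (one knows the relevant moduli space has pure cohomology from the Corollary following Theorem \ref{Grothendieck}); so $C\big(\mathcal{M}_\mathrm{pc}(r,d,\alpha)\big)_R$ maps to a pure Voevodsky motive as well.

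Next I would combine these two facts with Theorem \ref{Voevodsky}. We have the chain
\[
C\big(\mathcal{M}_\mathrm{Higgs}(r,d,\alpha)\big)_R \longmapsto M_{\mathrm{gm}}\big(\mathcal{M}_\mathrm{Higgs}(r,d,\alpha)\big)_R \cong M_{\mathrm{gm}}\big(\mathcal{M}_\mathrm{pc}(r,d,\alpha)\big)_R \longmapsfrom C\big(\mathcal{M}_\mathrm{pc}(r,d,\alpha)\big)_R,
\]
where the middle isomorphism is Theorem \ref{Voevodsky} and the two outer arrows are the embedding applied to the respective Chow motives. Since the embedding $\textbf{Chow}^{\mathrm{eff}}(\mathbb{C}; R) \hookrightarrow DM^{\mathrm{eff}}_{\mathrm{gm}}(\mathbb{C}; R)$ is fully faithful, the isomorphism of the images lifts uniquely to an isomorphism $C\big(\mathcal{M}_\mathrm{Higgs}(r,d,\alpha)\big)_R \cong C\big(\mathcal{M}_\mathrm{pc}(r,d,\alpha)\big)_R$ in $\textbf{Chow}^{\mathrm{eff}}(\mathbb{C}; R)$, which is exactly the claim.

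The main obstacle is purely formal rather than computational: one must be careful that the purity invoked from \cite[Corollary A.5]{HL21} genuinely places both motives in the essential image of the embedding, and that the embedding is full as well as faithful (fullness is what lets one lift the abstract $DM_{\mathrm{gm}}$-isomorphism to a Chow-level isomorphism rather than merely transporting Hom-groups). The delicate point worth flagging is the compatibility of coefficients and of the effective-versus-stable setting; since the embedding lands in the effective category $DM^{\mathrm{eff}}_{\mathrm{gm}}(\mathbb{C}; R) \subset DM_{\mathrm{gm}}(\mathbb{C}; R)$ and the isomorphism of Theorem \ref{Voevodsky} is stated in $DM_{\mathrm{gm}}(\mathbb{C}; R)$, I would note that for smooth semiprojective varieties the purity guarantees the motives already live effectively, so Voevodsky's cancellation theorem ensures the isomorphism descends to the effective category, where the Chow comparison takes place.
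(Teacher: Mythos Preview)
Your strategy is exactly the paper's: invoke semiprojectivity to obtain purity of the Voevodsky motive via \cite[Corollary~A.5]{HL21}, then lift the Voevodsky-level isomorphism through the fully faithful embedding $\textbf{Chow}^{\mathrm{eff}}(\mathbb{C};R) \hookrightarrow DM^{\mathrm{eff}}_{\mathrm{gm}}(\mathbb{C};R)$. The paper's own proof is the one-line remark that it is identical to that of Theorem~\ref{Chow}; your write-up is in fact more careful about the formal points (full faithfulness, effectivity, cancellation).

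There is, however, a systematic slip: the statement concerns the \emph{fixed-determinant} moduli spaces $\mathcal{M}^{\xi}_{\mathrm{Higgs}}(r,d,\alpha)$ and $\mathcal{M}^{\xi}_{\mathrm{pc}}(r,d,\alpha)$, yet throughout your argument you work with $\mathcal{M}_{\mathrm{Higgs}}(r,d,\alpha)$ and $\mathcal{M}_{\mathrm{pc}}(r,d,\alpha)$ and invoke Theorem~\ref{Voevodsky}. As written, you have proved Theorem~\ref{Chow}, not the present theorem. You must replace all objects by their fixed-determinant versions, cite the semiprojectivity of $\mathcal{M}^{\xi}_{\mathrm{Higgs}}(r,d,\alpha)$ established at the beginning of Section~6 (rather than Section~\ref{HiggsSemiproj}), and appeal to Theorem~\ref{fix2} for the isomorphism of Voevodsky motives. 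With those substitutions the argument is correct and coincides with the paper's.
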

\begin{proof}
The proof is the same as in Theorem \ref{Chow}.
\end{proof}

In \cite[Theorem $6.9$]{FHL23}, the authors showed that the (integral) Chow motives of the moduli space $\mathcal{M}^{\alpha}_{\mathrm{Higgs}}(\xi) = \mathcal{M}^{\xi}_\mathrm{Higgs}(2,d,\alpha)$ of semistable parabolic Higgs bundles of rank $2$, odd degree $d$ and generic weight $\alpha$ with full flags at $n$ points is given by
\begin{align*}
    C(\mathcal{M}^\alpha_{\mathrm{Higgs}}(\xi))_{\mathbb{Z}} \cong &C(\mathcal{M}(\xi))_{\mathbb{Z}} \otimes C(\mathbb{P}^1)^{\otimes n}_{\mathbb{Z}}\\ &\oplus \bigoplus_{{0 \leq l \leq n} \atop {\tfrac{l+1-n}{2} \leq j \leq g-1}} C(\widetilde{\mathrm{Sym}}^{2j+n-l-1}(X))_{\mathbb{Z}}(3g-2j+l-2)^{\oplus \binom{n}{l}},
\end{align*}
where $\widetilde{\mathrm{Sym}}^i(X) \to \mathrm{Sym}^i(X)$ is the \'etale cover of degree $2^{2g}$ induced by the multiplication by-$2$ map on $\mathrm{Jac}(X)$ and $\mathcal{M}(\xi) = \mathcal{M}^\xi(2,d)$ is the moduli space of semistable vector bundles with fixed determinant $\xi$. Since the cohomology of $\mathcal{M}^\alpha_{\mathrm{Higgs}}(\xi)$ is pure, from the above theorems \ref{fix2} and \ref{fix3}, we get a description of the Voevodsky (or, Chow) motives of the moduli space of parabolic connections with fixed determinant.

\section*{Acknowledgement}
We sincerely appreciate the anonymous reviewer whose insights and recommendations helped us to make this manuscript clearer and better. This work was supported by the Institute for Basic Science (IBS-R003-D1).

\end{document}